\theoremstyle{plain}
   \newtheorem{theorem}{Theorem}[section]
   \newtheorem{proposition}[theorem]{Proposition}
   \newtheorem{lemma}[theorem]{Lemma}
   \newtheorem{corollary}[theorem]{Corollary}
\theoremstyle{definition}
   \newtheorem{example}{Example}[section]
\theoremstyle{remark}
   \newtheorem{remark}[theorem]{Remark}
\newcommand{\NN}{\mathbb{N}}
\newcommand{\zz}{\mathbf{z}}
\newcommand{\ww}{\mathbf{w}}
\newcommand{\xx}{\mathbf{x}}
\newcommand{\yy}{\mathbf{y}}
\newcommand{\JJ}{\mathscr{L}}
\newcommand{\FQ}{\text{FQSym}}
\newcommand{\BB}{\text{B}}
\newcommand{\EE}{\mathcal{E}}
\newcommand{\FF}{\mathcal{F}}
\newcommand{\GG}{\mathcal{G}}
\newcommand{\RR}{\mathbb{R}}
\newcommand{\CC}{\mathbb{C}}
\newcommand{\sym}{\mathfrak{S}}
\renewcommand{\mod}{\mathop{\rm \ mod}}
\renewcommand{\Im}{{\rm Im}}
\renewcommand{\Re}{{\rm Re}}
\newcommand{\FQS}{\text{FQSym}}
\def\newop#1{\expandafter\def\csname #1\endcsname{\mathop{\rm
#1}\nolimits}}
\title{Multivariate $P$-Eulerian polynomials} 
\author{Petter Br\"and\'en}
\author{Madeleine Leander}
\address{Department of Mathematics, Royal Institute of Technology, SE-100 44 Stockholm,
Sweden}
\email{pbranden@kth.se}
\thanks{The first author is a Wallenberg Academy Fellow 
  supported by a grant from the Knut and Alice Wallenberg
  Foundation, and the Swedish Research Council (VR)}
\address{Department of Mathematics, Stockholm University, SE-106 91
  Stockholm, Sweden}
\email{madde@math.su.se}
\begin{document}
\begin{abstract}
The $P$-Eulerian polynomial  counts  the linear extensions of a labeled partially ordered set, $P$, by their number of descents. It is known that the 
$P$-Eulerian polynomials are real-rooted for various classes of posets $P$. The purpose of this paper is to extend these results to polynomials in several variables. To this end we study multivariate extensions of $P$-Eulerian polynomials and prove that for certain posets these polynomials are stable, i.e., non-vanishing whenever all variables are in the upper half-plane of the complex plane. 
A natural setting for our proofs is the Malvenuto-Reutenauer algebra of permutations (or the algebra of free quasi-symmetric functions). In the process we identify an algebra on Dyck paths, which to our knowledge has not been studied before.

\end{abstract}
\maketitle

\thispagestyle{empty}
\section{Introduction}
The Eulerian polynomials have been studied frequently in enumerative combinatorics, as well as in other areas since they first appeared in Euler's work \cite{Euler}, see \cite{Kyle}. 
The $n$th \emph{Eulerian polynomial} may be defined as the generating polynomial of the \emph{descent statistic} over the symmetric group $\sym_n$: 
$$
A_n(x) := \sum_{\pi \in \sym_n} x^{\des(\pi)+1},
$$
where $\des(\pi) := |\{ 1 \leq i \leq n-1 : \pi_i > \pi_{i+1}\}|$. An important property of the Eulerian polynomials is that all their zeros are real, i.e., $A_n(x)$ splits over $\RR$. This was already noted by Frobenius \cite{Fro}, and is not an isolated phenomenon as surprisingly many polynomials appearing in combinatorics are real-rooted, see \cite{FSurvey,PSurvey,RSurvey}. 

Recently a theory of multivariate stable (``real-rooted'') polynomials has been developed \cite{LYPSI,LYPSII,Wa3}. A multivariate polynomial is \emph{stable} if it is nonzero whenever all the variables have positive imaginary parts. Hence a univariate polynomial with real coefficients is stable if and only if all its zeros are real.  Efforts have been made to lift results concerning the zero distribution of univariate polynomials in combinatorics to concern multivariate extensions of the polynomials, see \cite{BrU,Mon,HV1,HV2}. There are several benefits of such a refinement. Firstly the stability of the multivariate polynomial implies the real-rootedness of the univariate polynomial. Secondly, the proofs of the multivariate statements are often simpler, and may lead to a better understanding of the combinatorial setting in question. Most importantly multivariate stability implies several inequalities, refining unimodality and log-concavity, among the coefficients, see \cite{BBL,Wa3}. 

An extension of the Eulerian polynomials to labeled posets was introduced in Stanley's thesis \cite{Stan}, and further studied in \cite{Brenti,NSC1,NSC2,RW,Wa1,Wa2}. We define a \emph{labeled poset} on $n$ elements to be a poset $P=([n],\preceq)$ where $\preceq$ is the partial order and  $\leq$ is the natural order on $[n]:=\{1,2,\ldots, n\}$. The \emph{Jordan-H\"older set} of $P$ is the set of linear extensions of $P$:  
$$
\mathscr{L}(P):=\{\pi \in \sym_n : \mbox{if } \pi_i \preceq \pi_j \mbox{ then, } i\leq j \mbox{ for all } i,j \in [n]\}, $$
where each permutation $\pi =\pi_1\pi_2 \cdots \pi_n$ in the symmetric group, $\sym_n$, is written in one-line notation. The \emph{$P$-Eulerian polynomial} is defined by 
\begin{equation}\label{PEul}
A_P(x) := \sum_{\pi \in \JJ(P)} x^{\des(\pi)+1}.
\end{equation}
Thus the Eulerian polynomial, $A_n(x)$, is the $P$-Eulerian polynomial of an $n$-element anti-chain, i.e., the poset on $n$ elements with no relations. 

The Neggers-Stanley conjecture asserted that for each labeled poset $P$, $A_P(x)$ is real-rooted, see \cite{FSurvey,PSurvey,Ne}. The conjecture was disproved in \cite{NSC3}, and for natural labelings it was disproved in \cite{Stem2}. The conjecture was proved for several classes of posets in \cite{Brenti,Wa1}, and it is still open for the important class of naturally labeled graded posets. 

In this paper we introduce and study a multivariate version of the $P$-Eulerian polynomials. We prove that these polynomials are stable for classes of labeled posets for which the univariate $P$-Eulerian polynomials are known to be real-rooted. In particular we prove that  stability of multivariate $P$-Eulerian polynomials respects disjoint unions of posets (Corollary \ref{label}). We argue that the natural context for this is the algebra of free quasi-symmetric functions \cite{ DHT, MR}. In the process we identify a graded algebra, $\mathcal{D}$, on Dyck paths, which to our knowledge has not been studied before. One of our main theorems may be formulated as: The multiplication in $\mathcal{D}$ preserves stability (for an appropriate notion of stability of weighted sums of Dyck paths). 

The multivariate $P$-Eulerian polynomial is also an extension of Stembridge's peak polynomial \cite{Stem1,Stem2}. We introduce a multivariate peak polynomial for labeled posets $P$, and prove that it is nonzero whenever all variables are in the open right half-plane of the complex plane, whenever the multivariate $P$-Eulerian polynomial is stable.

\section{Multivariate $P$-Eulerian polynomials}
\label{disjoint}
For a permutation $\pi=\pi_1\pi_2\cdots \pi_n \in \sym_n$,  let 
$$
\AB(\pi):=\{\pi_i \in [n] : \pi_i<\pi_{i+1}\}
\quad \mbox{ and } \quad \DB(\pi):=\{\pi_{i} \in [n] : \pi_{i-1}>\pi_{i}\},
$$
where $\pi_0=\pi_{n+1}:=\infty$,  
denote the set of \emph{ascent bottoms} and \emph{descent bottoms} of $\pi$, respectively. 

Let $[n]' := \{i' : i \in [n]\}$ be a distinct copy of $[n]$.  For a permutation $\pi \in \sym_n$ define a monomial in the variables $\zz=\{z_e: e\in [n]\cup [n]'\}$:
$$w_1(\pi):=\prod_{e\in \DB(\pi)}z_e \prod_{e\in \AB(\pi)}z_{e'}= \prod_{e\in \BB(\pi)}z_e,$$
where $\BB(\pi)= \DB(\pi) \cup \{e' : e \in \AB(\pi)\}$. 
The \emph{multivariate $P$-Eulerian polynomial} is defined as 
$$A_P(\zz) :=\sum_{\pi\in \mathscr{L}(P)}w_1(\pi).$$ 
\begin{example}
Let $P$ be the poset below. 
\begin{center}
 \begin{tikzpicture}
  [scale=.4,auto=left,every node/.style={circle}]
\node (n5) at (-2,2) {$P=$};
  \node (n1) at (0,0) {1};
  \node (n2) at (0,4)  {2};
  \node (n3) at (4,0)  {3};
  \node (n4) at (4,4) {4};
  \foreach \from/\to in {n1/n2, n2/n3, n3/n4}
    \draw (\from) -- (\to);
\end{tikzpicture}
\end{center}
Then 
$\mathscr{L}(P)=\{1324, 1342, 3124,3142,3412\}$ and
$$ A_P(\zz)= z_1z_2z_{1'}z_{2'}z_{4'}+z_1z_2z_{1'}z_{2'}z_{3'} +z_1z_3z_{1'}z_{2'}z_{4'} +z_1z_2z_3z_{1'}z_{2'}+ z_1z_3z_{1'}z_{2'}z_{3'}.$$
\end{example}
Note that $A_P(\zz)$ is a polynomial in $2n$ variables, and homogeneous of degree $n+1.$ For anti-chains these polynomials were first considered by the first author in \cite{BrU}, where they were proven to be stable. 
An $(n-1)$-variable specialization for anti-chains was earlier defined in \cite{HV1}, but not proven to be stable.  

\begin{remark}\label{slots}
Let $\pi =\pi_1 \pi_2 \cdots \pi_n \in \sym_n$. For $0\leq i \leq n$, the $i$th \emph{slot} of $\pi$ is the ``space'' between $\pi_i$ and $\pi_{i+1}$, where $\pi_0=\pi_{n+1}=\infty$. A slot is uniquely determined  by an element of $B(\pi)$, namely $\pi_{i}'$ if  $\pi_i<\pi_{i+1}$ and $\pi_{i+1}$ if $\pi_i>\pi_{i+1}$. 

An \emph{internal slot} of $\pi$ is a slot which is not the first or the last slot of $\pi$.  An internal  slot is uniquely determined  by an element of $([n]\cup[n]') \setminus B(\pi)$, namely $\pi_{i+1}$ if  $\pi_i<\pi_{i+1}$ and $\pi_{i}'$ if $\pi_i>\pi_{i+1}$ ($1\leq i \leq n-1$). 
\end{remark}
We define the \emph{disjoint union} of two labeled posets $P=([n], \preceq_P)$  and $Q=([n], \preceq_Q)$ on ground sets $[n]$ and $[m]$ to be  the labeled poset $P \sqcup Q=([n+m], \preceq)$ whose set of relations is 
$$
\{ i \preceq j :  i, j \in [n] \mbox{ and } i \preceq_P j\} \cup 
\{ (n+i) \preceq (n+j) : i, j \in [m] \mbox{ and } i \preceq_Q j\}. 
$$
We want to see the effect on multivariate Eulerian polynomials upon taking disjoint unions. For two labeled posets $P$ and $Q$ with $A_P(\zz)$ and $A_Q(\zz)$ stable, we will analyze $A_{P\sqcup Q}(\zz)$ with respect to stability using free quasi-symmetric functions. 
In Section \ref{pps} we prove the following theorem.
\begin{theorem}
\label{PUQr}
Let $P$ and $Q$ be labeled posets. If $A_P(\zz)$ and $A_Q(\zz)$ are stable, then so is $A_{P\sqcup Q}(\zz)$. 
\end{theorem}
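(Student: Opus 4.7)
The plan is to identify $A_{P\sqcup Q}(\zz)$ as the image of a shuffle product in the Malvenuto--Reutenauer algebra and then exhibit the induced operation on weight polynomials as a stability-preserving operator applied to the product $A_P(\xx)\cdot A_Q(\yy)$ in two disjoint sets of variables. Concretely, every linear extension of $P\sqcup Q$ arises uniquely as a shuffle of some $\pi\in\JJ(P)$ with the shifted linear extension $\sigma[n]$ of some $\sigma\in\JJ(Q)$, so
$$
A_{P\sqcup Q}(\zz)=\sum_{\pi\in\JJ(P)}\sum_{\sigma\in\JJ(Q)}\sum_{\tau\in\pi\,\shuffle\,\sigma[n]} w_1(\tau);
$$
this is precisely the image of $F_P\cdot F_Q$ under the linear map $\Phi\colon\FQS\to\CC[\zz]$ sending a basis permutation $\pi$ to $w_1(\pi)$.

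For a fixed pair $(\pi,\sigma)$ I would analyse the inner shuffle sum via the slot picture of Remark~\ref{slots}. Because every letter of $\sigma[n]$ strictly exceeds every letter of $\pi$, the descent and ascent bottoms of a shuffle $\tau$ split cleanly by provenance: a $\pi$-letter $a$ lies in $\DB(\tau)$ iff $a\in\DB(\pi)$ or $a$ is the leftmost letter of a $\pi$-block in $\tau$ preceded by a nonempty $\sigma[n]$-block, and symmetrically for $\AB(\tau)$; a $\sigma[n]$-letter $b$ lies in $\BB(\tau)$ only when the corresponding $\sigma[n]$-adjacency is preserved in $\tau$ and $b$ already contributes to $\BB(\sigma)$ on that side. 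Enumerating shuffles by how $\sigma[n]$ is split into consecutive blocks and into which slots of $\pi$ these blocks are placed, the inner sum rewrites as the action of an explicit linear operator $T$ on the separated-variables product $w_1(\pi)\cdot w_1(\sigma[n])$. By linearity,
$$
A_{P\sqcup Q}(\zz)=T\bigl(A_P(\xx)\cdot A_Q(\yy)\bigr),
$$
where $\xx$ and $\yy$ are disjoint copies of the $z$-variables corresponding to $P$ and to $Q$ (with $Q$'s indices shifted by $n$).

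Since $A_P(\xx)$ and $A_Q(\yy)$ are stable in disjoint sets of variables, their product is stable; the theorem thus reduces to showing that $T$ preserves stability, which is the main obstacle. The natural strategy is to factor $T$ into compositions of elementary stability-preserving moves --- nonnegative substitutions, polarisation in the Grace--Walsh--Szeg\H{o} sense, and Lieb--Sokal-type differential operators of the kind used in \cite{BrU,BBL} --- and assemble $T$ inductively by inserting the letters of $\sigma[n]$ into the slots of $\pi$ one at a time. Alternatively one can try to verify the Borcea--Br\"and\'en symbol criterion for $T$ directly. The construction and verification of $T$ should be the technical core of the section, and appear to foreshadow the algebra $\mathcal{D}$ on Dyck paths mentioned in the introduction, whose multiplication rule packages precisely this kind of shuffle.
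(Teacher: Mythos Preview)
Your plan is correct and is essentially the paper's approach: express $A_{P\sqcup Q}(\zz)$ via the shuffle product in $\FQS$, identify an explicit linear operator acting on the product $w_1(f)\cdot\Gamma_n(w_1(g))$ in disjoint variable blocks, and prove that this operator preserves stability. The paper carries out precisely your ``alternative'' route: it writes the operator explicitly as $\Phi=\sum_{T,S}\eta^{T}\partial^{S}$ with $T\subseteq\FF$, $S\subseteq\GG$, $|S|=|T|+1$ (Lemma~\ref{shudif}), computes its Borcea--Br\"and\'en symbol, and then uses homogeneity to pass to Hurwitz stability, Grace--Walsh--Szeg\H{o} to diagonalise the symmetric variable blocks, and Malo's theorem to handle the resulting one-variable polynomial $\sum_k\binom{n}{k}\binom{m}{k+1}x^k$ --- this last reduction is the ingredient your plan is missing and is what makes the symbol approach go through cleanly, whereas the inductive ``insert one letter at a time'' factorisation you list first is not pursued.
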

We also consider a more general definition of disjoint union. Let  $P=([n],\preceq_P)$ and $Q=([m],\preceq_Q)$  be labeled posets, and let $S\subseteq [n+m]$ be a set of size $n$. Order the element of $S$ and $T=[n+m]\setminus S$ in increasing order $s_1<\cdots < s_n$ and $t_1< \cdots < t_m$. Define $P\sqcup_S Q=([n+m], \preceq)$ to be the poset on $[n+m]$ with  relations 
$s_i \preceq s_j$ if and only if $i\preceq_P j$, and $t_i \preceq t_ j$ if and only if $i\preceq_Q j$. 
Similarly, if $P_1, \ldots , P_m$ are labeled posets with $|P_i|=n_i$, $i \in [m]$, and $\sum_{i=1}^m n_i=n$ we may define $P_1 \sqcup_{S_1}P_2 \sqcup_{S_2} \cdots \sqcup_{S_{m-1}}P_m$ for any ordered partition $S_1 \cup \cdots \cup S_m=[n]$ with $|S_i|=n_i$, for all $i \in [m]$. 

\begin{corollary}
\label{label}
Let $P=([n],\preceq_P)$ and $Q=([m],\preceq_Q)$ be two labeled posets. If $A_P(\zz)$ and $A_Q(\zz)$ are stable, then so is $A_{P\sqcup_S Q}(\zz)$ for any $S\subseteq [n+m]$ with $|S|=n$. Indeed $A_{P\sqcup_S Q}(\zz)$  and $A_{P\sqcup Q}(\zz)$ differ only by a permutation of the variables. 
\end{corollary}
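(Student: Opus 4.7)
The plan is to transfer stability from $A_{P\sqcup Q}(\zz)$ to $A_{P\sqcup_S Q}(\zz)$ by exhibiting a permutation of the variable indices relating the two polynomials, and then invoke Theorem~\ref{PUQr}. Since permutations of variables preserve stability, the stability assertion reduces entirely to the second claim in the corollary: that the two polynomials differ only by such a permutation.

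To exhibit this permutation, first I would introduce the bijection $\sigma : [n+m] \to [n+m]$ defined by $\sigma(i) = s_i$ for $i \in [n]$ and $\sigma(n+j) = t_j$ for $j \in [m]$. Applied entrywise to a permutation, $\sigma$ induces a map $\mathscr{L}(P \sqcup Q) \to \mathscr{L}(P \sqcup_S Q)$, $\pi \mapsto \sigma\pi$, which is a bijection because $\sigma$ carries the defining relations of $P \sqcup Q$ onto those of $P \sqcup_S Q$. Extend $\sigma$ to a permutation $\hat\sigma$ of the variable set by $z_e \mapsto z_{\sigma(e)}$ and $z_{e'} \mapsto z_{\sigma(e)'}$. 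The target identity is $w_1(\sigma\pi) = \hat\sigma\bigl(w_1(\pi)\bigr)$ for every $\pi \in \mathscr{L}(P \sqcup Q)$; summing over linear extensions would then yield $A_{P \sqcup_S Q}(\zz) = A_{P \sqcup Q}(\hat\sigma \zz)$.

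The main obstacle is this slot-by-slot identity. Since $\sigma$ is order-preserving on each of $[n]$ and $\{n+1, \ldots, n+m\}$ separately, any slot of $\pi$ whose two adjacent entries lie in the same block retains its ascent/descent type under $\sigma$, and the corresponding ascent or descent bottom transforms as desired under $\hat\sigma$. The delicate case is the ``cross-block'' slots of $\pi$, where one neighbor lies in $[n]$ and the other in $\{n+1, \ldots, n+m\}$; such slots are automatically ascents in $\pi$, but in $\sigma\pi$ they may become descents depending on the interleaving of $S$ and $T$. One handles this by using the slot description in Remark~\ref{slots}, checking in each sub-case that the ascent or descent bottom in $\sigma\pi$ is precisely the image under $\sigma$ of the corresponding bottom in $\pi$. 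Once this combinatorial identity is established, the stability of $A_{P \sqcup_S Q}(\zz)$ follows immediately from the stability of $A_{P \sqcup Q}(\zz)$ together with the fact that $\hat\sigma$ is merely a permutation of variable indices.
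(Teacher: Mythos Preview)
Your key identity $w_1(\sigma\pi)=\hat\sigma\bigl(w_1(\pi)\bigr)$ fails, and in fact the second assertion of the corollary---that $A_{P\sqcup_S Q}(\zz)$ and $A_{P\sqcup Q}(\zz)$ differ only by a permutation of the variables---is false as stated, so no argument along these lines can succeed. Take $P$ to be the naturally labeled chain $1\prec 2$ and $Q$ a single point, with $S=\{1,3\}$. Then $\mathscr{L}(P\sqcup Q)=\{123,132,312\}$ and $\mathscr{L}(P\sqcup_S Q)=\{123,132,213\}$, and
\[
A_{P\sqcup Q}(\zz)=z_1z_{1'}z_{2'}\,(z_2+z_3+z_{3'}),\qquad
A_{P\sqcup_S Q}(\zz)=z_1z_{1'}\bigl(z_2z_{2'}+z_2z_{3'}+z_{2'}z_{3'}\bigr).
\]
The first polynomial splits into linear forms; the quadratic factor of the second is irreducible. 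Hence no permutation of the variables (of any kind) carries one to the other. Your particular $\hat\sigma$ already fails for $\pi=123$: here $\sigma\pi=132$ with $w_1(132)=z_1z_2z_{1'}z_{2'}$, while $\hat\sigma\bigl(w_1(123)\bigr)=z_1z_{1'}z_{3'}z_{2'}$.

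The trouble is precisely the cross-block slot you flag. (Incidentally, such a slot need not be an ascent in $\pi$: if the left neighbor lies in $[n+1,n+m]$ it is a descent.) When a slot changes type under $\sigma$, the contributed variable switches between the primed and unprimed families, and simultaneously the bottom letter changes, since $\min\bigl(\sigma(\pi_i),\sigma(\pi_{i+1})\bigr)$ need not equal $\sigma\bigl(\min(\pi_i,\pi_{i+1})\bigr)$ when $\pi_i,\pi_{i+1}$ lie in different blocks. Neither effect is absorbed by a permutation $\hat\sigma$ respecting the primed/unprimed partition. The paper's own argument, which instead proceeds by a sequence of adjacent transpositions $(t,s)$ with $t=s-1$, runs into the same obstruction: the contribution of the linear extensions in which $s$ and $t$ occupy adjacent positions is \emph{equal} in $A_{P\sqcup_S Q}$ and $A_{P\sqcup_{\tilde S}Q}$, but it is not invariant under the swap $z_s\leftrightarrow z_t$, $z_{s'}\leftrightarrow z_{t'}$, so the final ``Hence'' there does not follow either (the example above with $s=3$, $t=2$ exhibits this). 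The stability claim in the first sentence of the corollary may well be true---both polynomials above are stable---but establishing it requires a genuinely different argument, not a global relabeling of variables.
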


\begin{proof}
Suppose $S \neq [n]$ and let $T=[n+m]\setminus S$. Then there are $s\in S$ and $t\in T$ such that  $t= s - 1$. Indeed let $s$ be the smallest element of $S$ such that there is a $t \in T$ such that $t < s$. Then $s-1 \in T$. 
Hence let $s\in S$ and $t\in T$ be such that $t= s - 1$. Consider $\tilde{S}=S\cup\{t\}\setminus \{s\}$. For $\pi \in \sym_{n+m}$, let $\tilde{\pi}$ be the permutation obtained by swapping the letters $s$ and $t$ in $\pi$. Clearly $\pi \in  \JJ(P\sqcup_S Q)$ if and only if $\tilde{\pi} \in \JJ(P\sqcup_{\tilde S} Q)$. Moreover, since $s$ and $t$ are not related,  if $s$ and $t$ are adjacent in $\pi$, then 
$$
\pi \in \JJ(P\sqcup_S Q) \Leftrightarrow \tilde{\pi} \in \JJ(P\sqcup_S Q) \Leftrightarrow \pi \in \JJ(P\sqcup_{\tilde{S}} Q)
\Leftrightarrow  \tilde{\pi} \in  \JJ(P\sqcup_{\tilde{S}} Q).
$$
If $s$ and $t$ are not adjacent in $\pi$, then $w_1(\tilde{\pi})$ is obtained from $w_1(\pi)$ by swapping the variables $z_{s}$ and $z_{t}$, as well as the variables $z_{s'}$ and $z_{t'}$. Hence $A_{P\sqcup_{\tilde{S}} Q}(\zz)$ is obtained from  $A_{P\sqcup_S Q}(\zz)$ by the same change of variables. 

If $\tilde{S}=[n]$ we are done. Otherwise continue the process with $S=\tilde{S}$. This process will terminate, and then $\tilde{S}=[n]$. Indeed the sum of the elements in $\tilde{S}$ is strictly smaller than the sum of the elements in $S$. 
\end{proof}

We argue that the natural setting for Theorem \ref{PUQr} is the algebra of free quasi-symmetric functions, see \cite{DHT, MR}.
Let $\FQS = \bigoplus_{n=0}^{\infty} \FQS_n $ be a $\CC-$linear vector space where $\FQS_n$ has basis $\{\pi: \pi\in \mathfrak{S}_n\}$. The (shuffle-) product on $\FQS$ may be defined on basis elements as 
 $$
 \pi \shuffle \sigma= \sum_{\tau \in \JJ(P_\pi \sqcup P_\sigma)} \tau, 
 $$
where $P_\pi$ is the labeled chain $\pi_1 \prec \cdots \prec \pi_n$. 
That is, the shuffle product $\pi \shuffle \sigma$ is the sum over all ways of interleaving the two permutations $\pi$ and $\hat{\sigma}$, where $\hat{\sigma} = (\sigma_1 + n) \cdots (\sigma_m +n)$. For example 
\begin{eqnarray*}
132 \shuffle 21 &=& 13254+ 13524+13542+15324+15342\\ &+&15432+ 51324+51342+51432+54132.
\end{eqnarray*}
Let $\EE=\{1,2,\ldots, 1', 2', \ldots \}.$ 
Extend $w_1$ linearly to a weight function $w_1 : \FQS \rightarrow \CC [z_e: e\in \EE]$.  We will now introduce two linear operators on 
$\CC [z_e: e\in \EE]$. 
For $e \in \EE,$ 
let $\eta_e$ be the linear \emph{creation operator} defined by first setting $z_e=0$ in a polynomial and then multiplying it by $z_e$. Moreover, for a finite set $S \subseteq \EE$, let 
$$
\quad \eta^S := \prod_{e \in S}\eta_e, 
$$
where $\eta^{\emptyset}$ is the identity operator. 
Let further $\partial^S$ be the \emph{annihilation operator}
$$
\partial^S := \prod_{e \in S}\frac \partial {\partial z_e},
$$
where $\partial^{\emptyset}$ is the identity operator. We also introduce an operation, $\Gamma_k$, that shifts the variables of a polynomial as 
$$
\Gamma_k(f(z_1, z_2, \ldots, z_{1'}, z_{2'}\ldots))= f(z_{1+k}, z_{2+k}, \ldots, z_{(1+k)'}, z_{(1+k)'}, \ldots).
$$
\begin{lemma}\label{shudif}
Let $f \in \FQS_n$ and $g \in \FQS_m$, where $mn\geq 1$. Let also 
$\FF= [n]\cup [n]'$ and 
$\GG=[n+1, n+m]\cup[n+1,n+m]'.$ Then $w_1(f \cdot g)$ only depends on $w_1(f)$ and $w_1(g).$ Moreover
$$
w_1(f \cdot g) = \Phi (w_1(f)\Gamma_n (w_1(g))),
$$
where 
$$
\Phi= \sum_{T,S}\eta^T\partial^S,
$$
and where the sum is over all $T \subseteq \FF, S \subseteq \GG$ for which $|S|=|T|+1 $. 
\end{lemma}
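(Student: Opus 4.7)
By bilinearity of the shuffle product and linearity of $w_1$, we may assume $f = \pi \in \sym_n$ and $g = \sigma \in \sym_m$ are basis elements. Writing $\hat\sigma = (\sigma_1+n)\cdots(\sigma_m+n)$, the identity to prove becomes
\[
\sum_{\tau\in\pi\shuffle\hat\sigma} w_1(\tau) \;=\; \sum_{T,S} \eta^T\partial^S\bigl(w_1(\pi)\,\Gamma_n(w_1(\sigma))\bigr),
\]
summed over $T\subseteq \FF$ and $S\subseteq \GG$ with $|S|=|T|+1$. Now $w_1(\pi)\Gamma_n(w_1(\sigma))$ is a squarefree monomial supported on $\BB(\pi)\sqcup \BB(\hat\sigma)\subseteq \FF\cup\GG$, so the term indexed by $(T,S)$ on the right is nonzero precisely when $S\subseteq \BB(\hat\sigma)$ and $T\subseteq \FF\setminus\BB(\pi)$, in which case it equals the squarefree monomial supported on $\BB(\pi)\cup(\BB(\hat\sigma)\setminus S)\cup T$.

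The plan is to construct a bijection $\tau\leftrightarrow(T_\tau,S_\tau)$ between shuffles $\tau\in\pi\shuffle\hat\sigma$ and such admissible pairs, under which $w_1(\tau)$ equals the corresponding monomial on the right. Given $\tau$, decompose it into its maximal runs of $\pi$-letters and of $\hat\sigma$-letters, obtaining $k$ $\pi$-pieces (contiguous subsequences of $\pi_1\cdots\pi_n$) and $\ell$ $\hat\sigma$-pieces, which alternate along $\tau$. Let $T_\tau\subseteq \FF\setminus\BB(\pi)$ consist of the $k-1$ elements indexing, via Remark~\ref{slots}, the internal slots of $\pi$ at which consecutive $\pi$-pieces are separated. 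Let $S_\tau\subseteq \BB(\hat\sigma)$ consist of the $\ell-1$ elements similarly indexing the separations between consecutive $\hat\sigma$-pieces, together with the element indexing slot $0$ (resp.~slot $m$) of $\hat\sigma$ whenever the first (resp.~last) piece of $\tau$ is a $\pi$-piece. Since the endpoint pattern of $\tau$ forces $k-\ell\in\{-1,0,1\}$, a short four-case check gives $|S_\tau|=|T_\tau|+1$. The map is inverted by recovering the $\pi$-partition from $T$, the $\hat\sigma$-partition from the ``internal'' elements of $S$, and the start/end types from whether the two distinguished boundary elements of $\BB(\hat\sigma)$ lie in $S$.

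What remains is to verify the set equality $\BB(\tau)=\BB(\pi)\cup(\BB(\hat\sigma)\setminus S_\tau)\cup T_\tau$, from which $w_1(\tau) = \eta^{T_\tau}\partial^{S_\tau}(w_1(\pi)\Gamma_n(w_1(\sigma)))$ is immediate. This is a slot-by-slot check exploiting the crucial fact that every $\pi$-letter is strictly smaller than every $\hat\sigma$-letter. Three items must be established: (i) every variable in $\BB(\pi)$ lies in $\BB(\tau)$, because each unbroken slot of $\pi$ retains its contribution in $\tau$, while each broken internal slot $b_i$ produces two transition slots in $\tau$ carrying $z_{\pi_{b_i}'}$ and $z_{\pi_{b_i+1}}$, one of which matches the original $\BB(\pi)$-variable at $b_i$ and the other of which is precisely the complementary element of $\FF\setminus\BB(\pi)$ attached to slot $b_i$ by Remark~\ref{slots}---namely the element of $T_\tau$; (ii) each $\hat\sigma$-slot not in $S_\tau$ retains its $\BB(\hat\sigma)$-variable in $\BB(\tau)$; (iii) each $\hat\sigma$-slot in $S_\tau$ loses its variable. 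The main obstacle is the boundary bookkeeping: slot $0$ of $\hat\sigma$ contributes to $\BB(\tau)$ exactly when the first piece of $\tau$ is an $\hat\sigma$-piece, and analogously for slot $m$---this asymmetry (slots $0$ and $n$ of $\pi$ always contribute, while slots $0$ and $m$ of $\hat\sigma$ contribute only conditionally) is the combinatorial content behind the constraint $|S|=|T|+1$ in the definition of $\Phi$.
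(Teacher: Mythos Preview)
Your proof is correct and follows essentially the same strategy as the paper: reduce to basis elements, set up the bijection between shuffles $\tau$ and pairs $(T,S)$ with $T$ indexing internal slots of $\pi$ and $S$ indexing slots of $\hat\sigma$, and then verify that $\BB(\tau)=\BB(\pi)\cup T\cup(\BB(\hat\sigma)\setminus S)$ using the fact that every $\pi$-letter is smaller than every $\hat\sigma$-letter. The only cosmetic difference is that the paper pads $\hat\sigma$ with $\hat\sigma_0=\hat\sigma_{m+1}=\infty$ and writes $\tau_0\cdots\tau_{n+m+1}=w_1v_1\cdots v_kw_{k+1}$, which absorbs your four boundary cases (whether $\tau$ starts/ends with a $\pi$- or $\hat\sigma$-piece) into a single uniform description and makes the identity $|S|=|T|+1$ automatic.
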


\begin{proof}
 It suffices to prove the lemma for basis elements. 
Let $\pi\in \mathfrak{S}_n$ and $ \sigma \in \sym_m$. 
 A permutation in $\mathscr{L}(P_{\pi} \sqcup P_{\sigma})$ is uniquely determined by a subset $S$ of the slots of $\sigma$ and a subset $T$  of the internal slots of $\pi$ for which $|S|=|T|+1$. Indeed we may factor $\pi$ as $\pi=v_1 \cdots v_k$ according to $T$, and $\hat{\sigma}:=\hat{\sigma}_0(\sigma_1+n) \cdots (\sigma_m+n) \hat{\sigma}_{m+1}$, where $\hat{\sigma}_0=\hat{\sigma}_{m+1}=\infty$, as  $\hat{\sigma}=w_1\cdots w_{k+1}$ according to $S$. Then we get a unique permutation $\tau \in \mathscr{L}(P_{\pi} \sqcup P_{\sigma})$ for which 
$\tau_0\tau_1\cdots \tau_{n+m+1}= w_1v_1w_2 \cdots v_kw_{k+1}$, where $\tau_0=\tau_{n+m+1}=\infty$. 

The next step is to see how the descent and ascent bottoms of $\pi$ and $\sigma$ are transferred to $\tau$. 

Consider the effect of inserting $v_i$ between $w_i$ and $w_{i+1}$. Let $s \in S$ be the element of $B=B(\sigma)$ that determines the slot between $w_i$ and $w_{i+1}$ (see Remark \ref{slots}). Since the letters of $v_i$ are smaller than those of $\hat{\sigma}$ the letter $s$ will be removed from $B$. This corresponds to the action of $\partial /\partial z_s$. 

Consider the effect of inserting $w_i$ between $v_{i-1}$ and $v_{i}$ for $1<i \leq k$.
Let $t \in T$ be  the element of $\FF \setminus B(\pi)$ that determines the internal slot between $v_{i-1}$ and $v_{i}$. Since the letters of $w_i$ are greater than those of $\pi$, the letter $t$ will be added to $B$. This corresponds to action of $\eta_t$. Nothing happens when $w_1$ or $w_{k+1}$ are inserted at the ends. The lemma now follows. 
\end{proof}

Since $w_1(fg)$ only depends on $w_1(f)$ and $w_1(g)$, Lemma \ref{shudif} provides a ``descent- ascent-bottom" algebra which is a quotient of $\FQS$. This algebra may thus be defined by 
$$
{\rm DAB}_n = \mathrm{span}_\CC\{ w_1(\sigma) : \sigma \in \sym_n\}, \mbox{ and } {\rm DAB}= \oplus_{n=0}^\infty {\rm DAB}_n, 
$$
with multiplication of homogeneous elements defined by 
$f\bullet 1=1\bullet f=f$ and 
$$
f\bullet g = \Phi(f\Gamma_n(g)),
$$
if $f \in {\rm DAB}_n$, $g\in {\rm DAB}_m$, where $mn \neq 0$. By Lemma \ref{shudif}, $w_1: \FQS \rightarrow {\rm DAB}$ is an algebra homomorphism. We will see in Section \ref{algebraDyck} that ${\rm DAB}$ may be viewed as an algebra of Dyck paths and that the dimension of ${\rm DAB}_n$ is the $n$th Catalan number $C_n= \binom {2n} n /(n+1)$. 

\section{An algebra of Dyck paths}
\label{algebraDyck}
Recall that a \emph{Dyck path} of length $2n$ is a path in $\NN \times \NN$ starting from $(0, 0)$ and ending in $(2n, 0)$ using $2n$ steps, where each step is represented by one of the vectors $(1, 1)$ and $(1, -1)$. We call $u=(1,1)$ an \emph{up step}, and $d=(1,-1)$ a \emph{down step}. The number of Dyck paths of length $n$ is equal to the $n$th Catalan number. For us it will be convenient to code a Dyck path $w_1, w_2, \ldots, w_{2n}$, where $w_i \in \{u,d\}$, as the word 
\begin{equation}\label{dycode}
v_1v_2\cdots v_{2n} = uw_1\cdots w_{2n-1}.
\end{equation} Since $w_{2n}$ is always a down-step we lose no information by this representation. 

Define operators $\partial_1,\partial_2, \ldots$ and $\eta_1, \eta_2, \ldots $ on the algebra of noncommutative polynomials, $\CC\langle u,d \rangle,$ as follows. If $v_1v_2 \cdots v_n$ is a word with letters in $\{u,d\}$ and $i$ is a positive integer, then  
$$
\partial_i(v_1\cdots v_n)= \begin{cases}
v_1\cdots v_{i-1} d v_{i+1} \cdots v_n &\mbox{ if } v_i=u, \\
0 &\mbox{ if } i>n \mbox{ or } v_i=d,
\end{cases}
$$
and dually 
$$
\eta_i(v_1\cdots v_n)= \begin{cases}
v_1\cdots v_{i-1} u v_{i+1} \cdots v_n &\mbox{ if } v_i=d, \\
0 &\mbox{ if } i>n \mbox{ or } v_i=u.
\end{cases}
$$
Moreover if $S \subseteq \{1,2,\ldots\}$ is a finite set, then $\partial^S= \prod_{i \in S}\partial_i$ and $\eta^S= \prod_{i \in S}\eta_i$. Endow $\CC\langle u,d \rangle$  with a product $\bullet$ given by $f \bullet 1= 1 \bullet f =f$ and 
$$
(w_1w_2 \cdots w_m) \bullet (v_1v_2 \cdots v_n) = \sum_{T,S} \eta^T(w_1 \cdots w_m) \partial^S (v_1 \cdots v_n), 
$$
where the sum is over all finite sets $S, T \subset \{1,2,\ldots \}$ such that $|S|=|T|+1$, whenever $mn >0$.  
\begin{lemma}\label{vw}
If $w=w_1w_2 \cdots w_{2m}, v=v_1v_2 \cdots v_{2n} \in \CC\langle u,d \rangle$ are Dyck paths represented as in 
\eqref{dycode}, then $w \bullet v$ is a sum of words which all represent Dyck paths.  
\end{lemma}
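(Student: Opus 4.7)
The approach is direct: show that each nonzero term in the sum defining $w \bullet v$ is, as a word in $\{u,d\}$, a code of a Dyck path of length $2(m+n)$. The first step is to translate the Dyck path condition into a condition on the code. Reading the definition in \eqref{dycode} backwards, a word $\omega_1 \omega_2 \cdots \omega_{2N} \in \{u,d\}^{2N}$ codes a Dyck path of length $2N$ if and only if it contains exactly $N+1$ letters $u$ and $N-1$ letters $d$, and every nonempty prefix has strictly more $u$'s than $d$'s (call this the \emph{strict prefix property}). This follows from the defining property of Dyck paths applied to $\omega_2 \cdots \omega_{2N} d$, noting that the prepended $u$ at position $1$ converts the non-strict prefix inequality into a strict one.

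Fix a pair $(T,S)$ with $|S| = |T| + 1$ for which $\eta^T(w)$ and $\partial^S(v)$ are both nonzero, and write $w' = \eta^T(w)$ and $v' = \partial^S(v)$. Non-vanishing forces $w_i = d$ for all $i \in T$ and $v_j = u$ for all $j \in S$, so $\eta^T$ flips $|T|$ down-steps of $w$ to up-steps while $\partial^S$ flips $|S|$ up-steps of $v$ to down-steps. A direct letter count, starting from the fact that $w$ has $m+1$ u's and $m-1$ d's and $v$ has $n+1$ u's and $n-1$ d's, shows that the concatenation $w' v'$ has the correct number $m+n+1$ of $u$'s and $m+n-1$ of $d$'s for a code of length $2(m+n)$.

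The heart of the proof is verifying the strict prefix property for $w' v'$. For a prefix lying entirely inside $w'$, the property is immediate: $\eta^T$ only replaces $d$'s by $u$'s, and so cannot decrease any prefix $u$--$d$ difference of $w$, which already had the strict prefix property. For a prefix of $w' v'$ reaching position $k$ inside $v'$, write its $u$--$d$ difference as the total $u$--$d$ difference of $w'$ plus the $u$--$d$ difference of the length-$k$ prefix of $v'$. The first quantity equals $2 + 2|T|$, because every Dyck-code has total $u$--$d$ difference exactly $2$ and $\eta^T$ contributes $+2|T|$. The second quantity equals the corresponding prefix difference in $v$ (which is $\geq 1$ by strict prefix property of $v$) minus $2 \, |S \cap \{1,\ldots,k\}| \geq -2|S|$. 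Summing and using $|S| = |T|+1$ yields a difference of at least $(2+2|T|) + (1 - 2(|T|+1)) = 1$, so strict positivity persists across the boundary.

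The only delicate point is this crossing estimate: the loss of up to $2|S|$ in the prefix difference of $v'$ caused by the $\partial^S$ action must be absorbed by the surplus $2|T|$ accumulated in $w'$ from the $\eta^T$ action, and the relation $|S| = |T|+1$ is precisely what guarantees a leftover of $1$ to preserve strict positivity. Everything else is routine bookkeeping with letter counts, so no further obstacle is expected.
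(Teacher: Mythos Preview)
Your proof is correct and follows essentially the same approach as the paper's: both arguments characterize the Dyck-code condition as a height (prefix-sum) condition, observe that $\eta^T$ can only raise heights in the $w$-part, and then verify that across the boundary the surplus $2+2|T|$ accumulated at the end of $w'$ compensates for the deficit of at most $2|S|=2|T|+2$ introduced by $\partial^S$ in $v'$, leaving at least the original height $h(k)\geq 1$ of $v$. Your phrasing via the ``strict prefix property'' is equivalent to the paper's ``crosses $y=1$ exactly once'' formulation, and your bookkeeping is in fact a bit cleaner than the paper's (which has some evident typos in the roles of $m,n$ and $S,T$).
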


\begin{proof}
A word $w=w_1w_2 \cdots w_{2n}$ in the alphabet $\{u,d\}$ represents a Dyck path as in \eqref{dycode} if and only if $w_1=w_2=u$ and the corresponding path $w_1,\ldots, w_{2m}$ (in the $(x,y)$-plane) is a path from $(0,0)$ to $(2m,2)$ which crosses $y=1$ exactly once. Clearly the path corresponding to $\eta^T(w_1 \cdots w_{2m}) \partial^S (v_1 \cdots v_{2n})=P$ does not cross $y=1$ when $1<x \leq 2m$. Let $h(x)$ be the height of the path corresponding to $v_1 \cdots v_{2n}$ after $x$ steps. If $2n < 2n+x \leq 2(n+m)$ then the height in $P$ after $2n+x$ steps is at least $2|S| +2-2|T| +h(x)\geq h(x)$, since the path corresponding to $w_1 \cdots w_m$ ends at height $2$, and we have turned $|S|$ down steps to up steps and at most $|T|$ up steps to down steps. 
\end{proof}

By Lemma \ref{vw} we have a graded Dyck algebra 
$$
\mathcal{D}= \oplus_{n=0}^\infty \mathcal{D}_n,
$$
where $\mathcal{D}_n$ is the span of all Dyck paths $v_1v_2\cdots v_{2n}$ coded as in \eqref{dycode}.  We will now see that $\mathcal{D}$ is isomorphic to ${\rm DAB}$. First define an algebra homomorphism $\Theta : {\rm DAB} \rightarrow (\CC\langle u,d \rangle, \bullet)$ as follows. If $M_n$ is a monomial defining  a basis element of ${\rm DAB}_n$, let $\Theta(M_n)= v_1v_2\cdots v_{2n}$ be defined as follows. For each $i \in [n]$ let 
\begin{itemize}
\item $v_{2i-1}=u$ if and only if $x_i$ appears in $M_n$, and 
\item $v_{2i}=u$  if and only if $x_{i'}$ appears in $M_n$.
\end{itemize}
By construction of the product $\bullet$ on $\CC\langle u,d \rangle$, we see that $\Theta$ is an algebra homomorphism. 
\begin{theorem}
The map $\Theta$ is an algebra isomorphism between the algebras ${\rm DAB}$ and $\mathcal{D}$. 
\end{theorem}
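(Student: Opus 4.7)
My plan is to verify that $\Theta$ is well-defined as a map from ${\rm DAB}$ into $\mathcal{D}$, and that its restriction to the monomial basis of ${\rm DAB}_n$ is a bijection onto the Dyck paths of semilength $n$. The algebra-homomorphism property is already built into the definition of the product $\bullet$ on $\CC\langle u,d\rangle$, so only well-definedness and bijectivity need to be established.

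First I would check that $\Theta(w_1(\sigma)) \in \mathcal{D}_n$ for every $\sigma \in \sym_n$. For each $i \in [n]$, classify $i$ by its neighbors in the augmented sequence $(\infty,\sigma_1,\ldots,\sigma_n,\infty)$: \emph{valley} $V$ if both larger, \emph{peak} $P$ if both smaller, \emph{ascent-run} $A$ if left smaller and right larger, \emph{descent-run} $D$ otherwise. Inspecting $\DB(\sigma)$ and $\AB(\sigma)$ shows the pair $v_{2i-1}v_{2i}$ of $\Theta(w_1(\sigma))$ equals $uu$, $dd$, $du$, $ud$ in these four cases. Since the height of $v$ at position $2k$ equals $2(V(k)-P(k))$, where $V(k)$ and $P(k)$ count valleys and peaks among values $\le k$, the characterization of encoded Dyck paths from Lemma \ref{vw} reduces to $V(k)-P(k)\ge 1$ for $1\le k\le n$, with equality at $k=n$. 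I would prove this via an \emph{excursion argument}: the positions of the augmented sequence carrying values $\le k$ split into $r_k$ maximal runs, each flanked by values strictly greater than $k$. On each such run, a parity count for a finite sequence of distinct reals flanked by $+\infty$ yields exactly one more local minimum than local maxima, so $V(k)-P(k)=r_k\ge 1$, with $r_n=1$.

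Next I would prove surjectivity of $\Theta$ onto $\mathcal{D}_n$ by induction on $n$. Reading the type sequence $(T_i)_{i=1}^n$ off a Dyck path, validity becomes $r_k:=V(k)-P(k)\ge 1$ for $k\ge 1$ and $r_n=1$; note that $T_1=V$ and $T_n\in\{A,D,P\}$. If $T_n=A$ (resp.\ $D$), I apply induction to $(T_1,\ldots,T_{n-1})$ and set $\sigma=\sigma'n$ (resp.\ $\sigma=n\sigma'$). The genuinely new case is $T_n=P$, where $r_{n-1}=2$: pick the smallest $j$ with $r_k\ge 2$ for all $k\ge j$. The jump $r_{j-1}\le 1$, $r_j\ge 2$ forces $T_j=V$, and $r_{j-1}=1$ (since $T_1=V$ gives $r_1=1$, so $j\ge 2$). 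Replacing $T_j=V$ by $T'_j=A$ decreases each $r_k$ for $k\ge j$ by one and yields a valid type sequence for $\sym_{n-1}$, realized by induction by some $\sigma''\in\sym_{n-1}$. Inserting $n$ immediately before the value $j$ in $\sigma''$ produces $\sigma\in\sym_n$: all other values retain their neighbor comparisons, the value $j$ acquires the new left neighbor $n>j$ (flipping its type from $A$ back to $V$), and $n$ lies between two values smaller than itself, giving $T_n=P$.

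Finally, the distinct monomials $\{w_1(\sigma):\sigma\in\sym_n\}$ form a basis of ${\rm DAB}_n$ (being distinct monomials in commuting variables), and $\Theta$ sends this basis injectively into $\mathcal{D}_n$ (the monomial is fully recoverable from the $u/d$ pattern of its image) and surjectively by the preceding step. Hence $\Theta$ restricts to a bijection between bases and is therefore an algebra isomorphism. I expect the main obstacle to be the $T_n=P$ case of surjectivity: locating the correct pivot $j$, checking that the $V\to A$ modification is itself a valid type sequence, and verifying that the subsequent insertion of $n$ disturbs only $j$'s type. The identity $V(k)-P(k)=r_k$ proved in the well-definedness step is precisely what guarantees the pivot exists.
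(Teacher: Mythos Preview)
Your argument is correct and follows a genuinely different route from the paper's proof. The paper leverages the algebra structure throughout: well-definedness of $\Theta$ into $\mathcal{D}$ comes for free from Lemma~\ref{vw} together with the identity $1^{\shuffle n}=\sum_{\sigma\in\sym_n}\sigma$, which shows that every basis monomial of ${\rm DAB}_n$ lies in the support of $(uu)^{\bullet n}$ and hence in $\mathcal{D}_n$; surjectivity is likewise proved algebraically, by exhibiting each Dyck path $v\in\mathcal{D}_n$ in the support of $(uu)\bullet w$ for a suitable $w\in\mathcal{D}_{n-1}$ (change the first down step of $v$ to an up step and delete the initial $uu$). Your proof instead works combinatorially at the level of permutations: the excursion identity $V(k)-P(k)=r_k$ is a clean, self-contained reason why $\Theta(w_1(\sigma))$ is a (shifted) Dyck path, and your inductive construction of a permutation realizing a prescribed type sequence is essentially a by-hand Fran\c{c}on--Viennot-type argument. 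The paper's approach is shorter because it recycles Lemma~\ref{vw} and the homomorphism property; yours is more explicit and makes the bijection with Dyck paths transparent without appealing to the product $\bullet$ at all.

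Two minor points of phrasing. First, when you write ``pick the smallest $j$ with $r_k\ge 2$ for all $k\ge j$'', this is literally impossible since $r_n=1$; you mean for all $j\le k\le n-1$, and the rest of your argument uses exactly that. Second, in the $T_n=P$ case you should note (as you implicitly use) that $j$ cannot sit at position $1$ in $\sigma''$: since $T'_j=A$ forces the left neighbour of $j$ to be smaller than $j$, position $1$ (left neighbour $\infty$) is excluded, so the inserted $n$ really has two finite neighbours and is a peak. With these clarifications, your proof is complete.
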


\begin{proof}
Clearly $\Theta(z_1z_{1'})= uu$ is the unique basis element of $\mathcal{D}_1$. In $\FQS$ we have the identity 
$$
1^n = 1\shuffle 1 \shuffle \cdots \shuffle 1= \sum_{\sigma \in \sym_n} \sigma, 
$$ 
where $1 \in \sym_1$. Since $w_1 : \FQS \rightarrow {\rm DAB}$ and $\Theta : {\rm DAB} \rightarrow (\CC\langle u,d \rangle, \bullet)$ are  algebra homomorphisms, with $w_1(1)=z_1z_{1'}$, we see that 
$\Theta({\rm DAB}_n)$ is the span of all words in the support of 
$$
(uu)^n=(uu)\bullet \cdots \bullet (uu) \in \mathcal{D}_n. 
$$
Hence $\Theta : {\rm DAB} \rightarrow \mathcal{D}$ by Lemma \ref{vw}. The homomorphism $\Theta : {\rm DAB} \rightarrow \mathcal{D}$ is injective since $\Theta : {\rm DAB} \rightarrow (\CC\langle u,d \rangle, \bullet)$ is injective. To prove surjectivity it remains to prove that for any Dyck path $v \in \mathcal{D}_n$ there is a Dyck path $w \in \mathcal{D}_{n-1}$ such that $v$ is in the support of 
$(uu) \bullet w$. Let $w$ be the word obtained by first changing the first down step (say at position $j$) in $v$ to an up set and then  deleting the first two letters. Then $w$ is a Dyck path and $v=uu\partial_{j-2}(w)$ is in the support of   $(uu) \bullet w$. 
\end{proof}

\begin{example}
The product of $uudu, uuud \in \mathcal{D}_2$ is  $uudu \bullet uuud = uuduuudd + uuuududd + uuuuuddd+uududuud+uuuuddud+ uuduudud.$
\end{example}

\begin{remark}
There is a much studied graded algebra on rooted planar binary trees called 
the Loday-Ronco algebra \cite{LR}. The Loday-Ronco algebra is a sub-algebra of $\FQ$, and rooted planar binary trees are in bijection with Dyck paths. Hence it is natural to ask if this algebra and $\mathcal{D}$ are isomorphic. We have not found such an isomorphism. 
\end{remark}

\section{Products preserving stability}
\label{pps}
To prove that $\Phi$ in Theorem \ref{shudif} preserves stability we need two theorems on stable polynomials. The first theorem is a version of the celebrated Grace-Walsh-Szeg\H{o} theorem, see e.g. \cite[Proposition 3.4]{LYPSI}. 

Let $\Omega \subset \CC^n$. A polynomial $P(\zz) \in \CC[z_1, \ldots, z_n]$ is \emph{$\Omega$-stable} if 
$$
\zz \in \Omega \quad \mbox{ implies } \quad P(\zz) \neq 0.
$$

\begin{theorem}[]\label{gws}
Let $P(z_1,z_2, \ldots , z_n)$ be a polynomial, let $H \subset \CC$ be an open half-plane, and $1\leq k \leq n$. If $P$ is of degree at most one in $z_i$ for each $1\leq i \leq k$ and symmetric in the variables $z_1, \ldots , z_k$,  then $P(z_1,z_2, \ldots , z_n)$ is $H^n$-stable if and only if the polynomial 
$P(z_1,z_1, \ldots , z_1, z_{k+1}, \ldots,  z_n)$ is $H^{n-k+1}$-stable.
\end{theorem}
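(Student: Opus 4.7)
The plan is to reduce the statement to the classical Grace--Walsh--Szeg\H{o} coincidence theorem. First I would dispose of the easy direction: if $P$ is nonzero on all of $H^n$ and $(w,z_{k+1},\ldots,z_n)\in H^{n-k+1}$, then the diagonal point $(w,w,\ldots,w,z_{k+1},\ldots,z_n)$ lies in $H^n$, so the specialized polynomial $P(z_1,z_1,\ldots,z_1,z_{k+1},\ldots,z_n)$ is nonzero there. This is a one-line verification that does not even use the degree or symmetry hypotheses.

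For the substantive direction I would argue by contradiction. Assume the diagonalization $P(z_1,z_1,\ldots,z_1,z_{k+1},\ldots,z_n)$ is $H^{n-k+1}$-stable, and suppose that $P(\zeta_1,\ldots,\zeta_n)=0$ for some $\zeta_1,\ldots,\zeta_n\in H$. Freeze the trailing coordinates and form
\[
Q(z_1,\ldots,z_k):=P(z_1,\ldots,z_k,\zeta_{k+1},\ldots,\zeta_n).
\]
By hypothesis $Q$ is multi-affine and symmetric in its $k$ arguments, and $Q(\zeta_1,\ldots,\zeta_k)=0$ with each $\zeta_i\in H$. Since an open half-plane is a circular region (indeed a convex one), the classical Grace--Walsh--Szeg\H{o} coincidence theorem produces a single point $\zeta\in H$ with $Q(\zeta,\ldots,\zeta)=0$, that is,
\[
P(\zeta,\ldots,\zeta,\zeta_{k+1},\ldots,\zeta_n)=0.
\]
But then $(\zeta,\zeta_{k+1},\ldots,\zeta_n)\in H^{n-k+1}$ is a zero of the diagonalized polynomial, contradicting its assumed stability.

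The entire content is thus packaged into a single invocation of Grace--Walsh--Szeg\H{o}, so the main (and only) obstacle is checking that its hypotheses are in force: multi-affinity and symmetry in the first $k$ coordinates are assumed outright, and an open half-plane is a convex circular region, which is exactly what is needed for the coincidence conclusion. Since the authors cite \cite[Proposition 3.4]{LYPSI} for this version, I would simply record the reduction above and rely on that reference for Grace--Walsh--Szeg\H{o} itself, rather than reproving it via apolarity and the Grace companion theorem.
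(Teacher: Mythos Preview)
Your reduction is correct and standard; note, however, that the paper does not actually supply a proof of this theorem---it is stated with a pointer to \cite[Proposition~3.4]{LYPSI} and used as a black box. What you have written is precisely the usual derivation of the parametrized version from the classical Grace--Walsh--Szeg\H{o} coincidence theorem (freeze the extra variables, then invoke coincidence on the convex circular region $H$), so there is nothing to contrast: your argument is the expected unpacking of the cited reference rather than an alternative route.
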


The next theorem is a special case of a recent characterization of stability preservers in \cite{LYPSI}. 
Let $\mathbb{C}_{\mathbf{1}}[\zz]=\mathbb{C}_{\mathbf{1}}[z_1, \ldots ,z_n]$ be the space of polynomials of degree at most one in $z_i$ for all $i$. For a linear operator $T:\mathbb{C}_{\mathbf{1}}[\zz] \rightarrow \mathbb{C}_{\mathbf{1}}[\zz]$ define its \emph{symbol} by
$$G_T(\zz,\ww)=T[(z_1+w_1)\cdots (z_n+w_n)]=\sum_{S\subseteq [n]}T(\zz^S)\ww^{[n]\backslash S}.$$ 

\begin{theorem}[\cite{LYPSI}]
\label{z+w}
Let $\Omega=\{z \in \CC : \Im(z)>0\}^n$ or $\Omega=\{z \in \CC : \Re(z)>0\}^n$ and let  $T:\mathbb{C}_{\mathbf{1}}[\zz] \rightarrow \mathbb{C}_{\mathbf{1}}[\zz]$ be a linear operator. 
\begin{itemize}
\item If $G_T(\zz,\ww)$ is $\Omega\times \Omega$-stable, then $T$ preserves $\Omega$-stability. 
\item If the rank of $T$ is greater than one and $T$ preserves $\Omega$-stability, then $G_T(\zz,\ww)$ is $\Omega \times \Omega$-stable.
\end{itemize}
\end{theorem}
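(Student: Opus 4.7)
The plan is to prove the two directions separately. Without loss of generality we may take $\Omega = \{\Im z > 0\}^n$: the right half-plane case reduces to this case via the substitution $z_i \mapsto -i w_i$, which conjugates one stability class to the other. The main tool is the coefficient expansion
\[
G_T(\zz,\ww) = \sum_{S \subseteq [n]} T(\zz^S)\, \ww^{[n]\setminus S},
\]
exhibiting $G_T$ as a generating function for $T$ on multi-affine monomials, together with Theorem \ref{gws}.

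For the forward direction, I would start from the coefficient-extraction identity
\[
T(P)(\zz) \;=\; Q(\partial_\ww)\, G_T(\zz,\ww)\Big|_{\ww = 0},\qquad Q(\ww) := \sum_S c_S\, \ww^{[n]\setminus S},
\]
for any $P(\zz) = \sum_S c_S\zz^S$; this is immediate from the expansion of $G_T$ since $\partial_\ww^U \ww^V|_{\ww=0} = \delta_{U,V}$ in the multi-affine setting. Noting that $Q(\ww) = \bigl(\prod_i w_i\bigr) \cdot P(1/w_1, \dots, 1/w_n)$, and that $w \mapsto 1/w$ exchanges the upper and lower half-planes while $\prod_i w_i$ is itself $\Omega$-stable, $Q$ is $\Omega$-stable whenever $P$ is. The forward direction now reduces to the claim that, if $G_T$ is $\Omega \times \Omega$-stable and $Q$ is multi-affine $\Omega$-stable, then $Q(\partial_\ww) G_T(\zz,\ww)|_{\ww=0}$ is $\Omega$-stable in $\zz$. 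I would establish this by polarization: introduce additional symmetric copies of each $w_i$, use Theorem \ref{gws} to maintain equivalence with the joint stability of $G_T$, then convert the differentiation-and-evaluation into a clean substitution whose stability preservation is transparent, applying Hurwitz at the boundary to handle the $\ww \to 0$ specialization.

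For the converse, I would use the test polynomial $\Pi(\zz,\ww) = \prod_{i=1}^n (z_i + w_i)$, which is $\Omega \times \Omega$-stable since $z_i + w_i = 0$ forces $w_i \in -\Omega$, disjoint from $\Omega$. By definition $G_T(\zz,\ww) = T[\Pi(\cdot,\ww)]$, with $T$ acting only in $\zz$; for every fixed $\ww_0 \in \Omega^n$ the slice $\Pi(\cdot, \ww_0)$ is $\Omega$-stable in $\zz$, so by hypothesis $G_T(\cdot,\ww_0)$ is either identically zero or $\Omega$-stable in $\zz$. Joint stability of $G_T$ on $\Omega \times \Omega$ then amounts to ruling out identical vanishing for any $\ww_0 \in \Omega^n$. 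The set $W = \{\ww_0 \in \CC^n : G_T(\cdot,\ww_0) \equiv 0\}$ is Zariski closed, and the rank-greater-than-one hypothesis is used precisely to force $W \cap \Omega^n = \emptyset$; the rank-one example $T(P) = P(-i)$, with $G_T(z,w) = -i + w$ vanishing at $w = i \in \Omega$, shows that the hypothesis is sharp.

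The principal obstacle is the forward direction: rigorously showing that $Q(\partial_\ww) G_T|_{\ww=0}$ is stable in $\zz$ is a composition-theorem step for stable polynomials (in the spirit of Lieb-Sokal), and the cleanest route is the Grace-Walsh-Szegő polarization sketched above. In the converse, the remaining subtlety is to use rank $>1$ to exclude the bad slice set $W$ from meeting $\Omega^n$: by Hurwitz, if $G_T(\cdot,\ww_0) \equiv 0$ at some interior $\ww_0 \in \Omega^n$, then the family $T[\Pi(\cdot,\ww)]$ degenerates in a neighborhood of $\ww_0$, and one deduces that $T$ annihilates a whole family of linearly independent stable polynomials; an appeal to the rank condition then yields the contradiction.
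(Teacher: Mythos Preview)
This theorem is not proved in the paper at all: it is quoted verbatim from \cite{LYPSI} (as the sentence preceding it makes explicit) and used as a black box in the proof of Theorem~\ref{fqs}. So there is no ``paper's own proof'' to compare your attempt against.

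That said, your sketch contains a genuine error. In the forward direction you assert that $Q(\ww)=\bigl(\prod_i w_i\bigr)P(1/w_1,\dots,1/w_n)$ is $\Omega$-stable whenever $P$ is. This is false over $\CC$: take $n=1$ and $P(z)=z+i$, which is stable (its only zero is $-i$), and compute $Q(w)=1+iw$, which vanishes at $w=i\in\Omega$. The inversion $w\mapsto 1/w$ sends the upper half-plane to the \emph{lower} half-plane, so what you actually obtain is that $Q$ is stable for the lower half-plane (equivalently, $\overline{Q}$ is $\Omega$-stable). The composition step you need is therefore not ``stable times stable'' but the apolar/dual pairing: if $G_T(\zz,\ww)$ is $\Omega\times\Omega$-stable and $Q(\ww)$ is stable for the \emph{conjugate} half-plane, then $Q(\partial_\ww)G_T(\zz,\ww)\big|_{\ww=0}$ is $\Omega$-stable in $\zz$. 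This is exactly the Lieb--Sokal/Weyl-algebra lemma that Borcea and Br\"and\'en establish in \cite{LYPSI}, and it does not follow from the polarization argument you outline; the sign (or half-plane) bookkeeping is essential. Your converse sketch is closer to the original argument, though the step ``rank $>1$ forces $W\cap\Omega^n=\emptyset$'' is only gestured at; in \cite{LYPSI} this is handled by showing that a stability-preserving $T$ with $G_T(\cdot,\ww_0)\equiv 0$ for some $\ww_0\in\Omega^n$ must have rank at most one, via a careful Hurwitz/analytic-continuation argument.
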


\begin{lemma}\label{Malo}
Let $m \geq 2$ and $n\geq 1$ be integers. All zeros of the polynomial 
$$
\sum_{k=0}^n \binom n k \binom m {k+1} x^k
$$
are real and negative. 
\end{lemma}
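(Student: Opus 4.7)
The plan is to exhibit $xP(x)$ as the Hadamard (coefficient-wise) product of two polynomials that already have only real, same-signed zeros, and then to invoke Malo's classical theorem on Hadamard products of real-rooted polynomials; the attribution in the lemma's name already points squarely at this approach.

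The first step is the elementary identity
$$
xP(x) \;=\; \sum_{k=1}^{n+1} \binom{n}{k-1}\binom{m}{k}\, x^{k},
$$
obtained by shifting the summation index $k \mapsto k-1$ in the definition of $P$. This realises $xP(x)$ as the Hadamard product of
$$
f(x) \;:=\; x(1+x)^n \;=\; \sum_{k\geq 0}\binom{n}{k-1}\,x^{k}
\qquad\text{and}\qquad
g(x) \;:=\; (1+x)^m \;=\; \sum_{k\geq 0}\binom{m}{k}\,x^{k}.
$$
The polynomial $f$ has a simple zero at $0$ and an $n$-fold zero at $-1$, while $g$ has a single zero at $-1$ of multiplicity $m$. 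In particular all zeros of $g$ are real and negative, which is exactly the hypothesis required by Malo's theorem.

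The second step is to apply Malo's theorem to $f$ and $g$, concluding that the Hadamard product $xP(x)$ has only real zeros. All coefficients of $xP(x)$ are manifestly non-negative, so Descartes' rule of signs excludes any positive real zero; hence every real zero of $xP(x)$ is non-positive, and after dividing by $x$ the same holds for $P(x)$. Finally, the constant term $P(0) = \binom{n}{0}\binom{m}{1} = m \geq 2$ is strictly positive (this is exactly where the hypothesis $m\geq 2$ is used), so $0$ is not a zero of $P$, and every zero of $P$ is strictly negative, as claimed.

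The only point requiring a little care is that $f$ and $g$ may have different degrees when $n+1 \neq m$, so one must invoke the version of Malo's theorem that allows factors of unequal degree (obtained by padding the shorter coefficient sequence with zeros, or truncating the product at $\min(n+1, m)$). This is completely standard, and I do not anticipate any genuine obstacle beyond Malo's theorem, Descartes' rule, and the evaluation $P(0) = m$.
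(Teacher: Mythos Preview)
Your proof is correct and follows essentially the same route as the paper: both write the polynomial as $x^{-1}\bigl(x(1+x)^n \ast (1+x)^m\bigr)$ and invoke Malo's theorem, with the negativity of the zeros then following from the positivity of the coefficients. One small quibble: the hypothesis $m\geq 2$ is not actually needed to make $P(0)=m$ positive (any $m\geq 1$ does that); rather, $m\geq 2$ guarantees that $P$ has positive degree, so that the assertion about its zeros is not vacuous.
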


\begin{proof}
The lemma is a consequence of Malo's theorem (see e.g. \cite[Theorem 2.4]{CC3}) which asserts that if $f=\sum_{k\geq 0} a_k x^k$ is a real-rooted polynomial and $g=\sum_{k\geq 0} b_k x^k$ is a real-rooted polynomial whose zeros all have the same sign, then the polynomial
$$
f \ast g = \sum_{k\geq 0} a_kb_k x^k 
$$
is real-rooted. Indeed, the polynomial in the statement of the theorem is $$x^{-1} (x(x+1)^n \ast (x+1)^m).$$ 
\end{proof}

We are now ready to state and prove the main theorem of this section. Note that Theorem \ref{PUQr} immediately follows from the theorem below.

\begin{theorem}
\label{fqs}
Let $f,g \in \FQS $ be two homogeneous elements  and let $w_1$ be defined as before.
If $w_1(f)$ and $w_1(g)$ are stable, then so is $w_1(f\cdot g)$.
\end{theorem}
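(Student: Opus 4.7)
The plan is to combine Lemma~\ref{shudif} with Theorem~\ref{z+w}. By Lemma~\ref{shudif}, $w_1(f\cdot g) = \Phi\bigl(w_1(f)\,\Gamma_n(w_1(g))\bigr)$; the shift $\Gamma_n$ merely renames variables and so preserves stability, and because $w_1(f)$ and $\Gamma_n(w_1(g))$ live in disjoint sets of variables, their product is stable. Since both $\eta_e$ and $\partial_e$ preserve multi-affineness, $\Phi$ maps $\mathbb{C}_{\mathbf{1}}[\zz]$ to itself. Thus it suffices, by Theorem~\ref{z+w}, to show that the symbol
$$G_\Phi(\zz,\ww) = \Phi\!\left[\prod_{e\in\FF\cup\GG}(z_e+w_e)\right]$$
is stable in $(\zz,\ww)$.

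A direct termwise computation of $\Phi$ applied to $\prod_e(z_e+w_e)$ yields
$$G_\Phi(\zz,\ww) = \sum_{\substack{T\subseteq \FF,\ S\subseteq \GG \\ |S|=|T|+1}} \prod_{e\in T} z_e w_e \prod_{e\in\FF\setminus T}(z_e+w_e)\prod_{e\in\GG\setminus S}(z_e+w_e).$$
The crux of the plan is to recognize $G_\Phi$ as a coefficient extraction from a manifestly stable polynomial. Introduce an auxiliary variable $s$ and set
$$\Psi(s,\zz,\ww) := \prod_{e\in\FF}\bigl(sz_e + sw_e + z_ew_e\bigr)\prod_{e\in\GG}\bigl(s+z_e+w_e\bigr).$$
Expanding by choosing a summand from each factor shows $G_\Phi = [s^{|\FF|+1}]\,\Psi$. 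Each factor of $\Psi$ is an elementary symmetric polynomial in three variables ($e_2(s,z_e,w_e)$ or $e_1(s,z_e,w_e)$) and is therefore stable; since a product of stable polynomials is stable (if the product vanished at a point in the open upper half-plane, some factor would), $\Psi$ is stable in $(s,\zz,\ww)$.

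To extract $G_\Phi$ from $\Psi$ in a stability-preserving way, I would use two standard closure properties. First, differentiation preserves stability: fixing $\zz,\ww$ in the upper half-plane, $\Psi(\cdot,\zz,\ww)$ has all roots in the closed lower half-plane, so by Gauss--Lucas the same is true of any derivative; hence $\partial_s^{|\FF|+1}\Psi$ is stable. Second, the specialization $s=0$ is a limit from the upper half-plane (take $s=i\varepsilon$, $\varepsilon\to 0^+$), so by Hurwitz's theorem for several complex variables the value $\bigl(\partial_s^{|\FF|+1}\Psi\bigr)\big|_{s=0} = (|\FF|+1)!\,G_\Phi$ is either stable or identically zero. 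Since the monomial expansion of $G_\Phi$ has positive coefficients (hence is not identically zero), $G_\Phi$ is stable, and the proof is complete.

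I expect the main obstacle to be locating the right auxiliary polynomial $\Psi$. The key insight is that $sz_e+sw_e+z_ew_e = e_2(s,z_e,w_e)$, which exhibits $\Psi$ as a product of elementary symmetric polynomials in three variables and thereby reduces the problem to classical multivariate stability. Once $\Psi$ is in hand, everything else is either a combinatorial identity (the expansion giving $G_\Phi$) or a standard closure property of stable polynomials (products, derivatives, Hurwitz). The role of Lemma~\ref{Malo} in the authors' approach may well be to handle a univariate reduction along a different route; the plan above does not need it.
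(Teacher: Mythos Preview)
Your proof is correct and takes a genuinely different route from the paper's.

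Both arguments reduce to showing that the symbol $G_\Phi$ is stable, and both write it exactly as you do. From there the paper proceeds by switching to the right half-plane (using homogeneity), pulling out the factor $\prod_e(z_e+w_e)$, and using that $z\mapsto z^{-1}$ fixes the right half-plane to reduce to the Hurwitz stability of the multi-affine, symmetric-in-$\FF$ and symmetric-in-$\GG$ polynomial $\sum_{S,T}\prod_{e\in S}y_e\prod_{f\in T}x_f$. Grace--Walsh--Szeg\H{o} then collapses this to a single bivariate polynomial $y\sum_k\binom{n}{k}\binom{m}{k+1}(xy)^k$, whose real-rootedness is established via Malo's theorem (Lemma~\ref{Malo}).

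Your approach avoids all of this. Recognising $G_\Phi$ as $[s^{|\FF|+1}]\Psi$ with $\Psi=\prod_{e\in\FF}e_2(s,z_e,w_e)\prod_{e\in\GG}e_1(s,z_e,w_e)$ is a clean trick: stability of $\Psi$ is immediate, and extracting a coefficient by $\partial_s^{|\FF|+1}$ followed by $s\to 0$ uses only standard closure properties. This bypasses the half-plane switch, Grace--Walsh--Szeg\H{o}, and Lemma~\ref{Malo} entirely, and is arguably more transparent. The paper's route, on the other hand, illustrates a reusable recipe (symmetrise, diagonalise via GWS, finish univariately) that applies in settings where no such auxiliary product presents itself.
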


\begin{proof}
Note that a homogeneous polynomial is $H^n$-stable for an open half-plane $H$ with boundary containing the origin if and only if it is $J^n$-stable for some (and then each) open half-plane $J$ with boundary containing the origin. Hence it suffices to prove that $\Phi$ preserves stability with respect to the open right half-plane (Hurwitz stability). Recall that $\Phi$ acts on multi-affine polynomials in the variables $\{z_e : e \in \FF \cup \GG\}$ where $\FF$ and $\GG$ are disjoint sets.   Now 
\begin{align*}
\partial^S \prod_{e \in \GG}(z_e+w_e) &= \prod_{e \in \GG \setminus S}(z_e+w_e)= \prod_{e \in \GG}(z_e+w_e)\prod_{e \in S}(z_e+w_e)^{-1}\quad \mbox{ and } \\
\eta^T \prod_{f \in \FF}(z_f+w_f) &= \prod_{f \in T}\!z_fw_f \!\!\!\prod_{f \in \FF \setminus T}\!\!(z_f+w_f)= \prod_{f \in \FF} (z_f+w_f) \prod_{f \in T}z_f w_f(z_f+w_f)^{-1}.
\end{align*}
Hence we may write the symbol of $\Phi$ as 
$$
G_\Phi(\zz,\ww) = \prod_{e \in \GG}(z_e+w_e)\prod_{f \in \FF}(z_f+w_f)   \sum_{S,T}   \prod_{e \in S} y_e \prod_{f \in T} x_f,
$$
where the sum is over all $S \subseteq \GG, T \subseteq \FF$ for which $|S|=|T|+1$, 
$$x_f=z_f w_f(z_f+w_f)^{-1}= \left( \frac{1}{w_f} + \frac{1}{z_f} \right)^{-1},$$ and  $y_e=(z_e+w_e)^{-1}$ for all $f \in \FF$ and $e\in \GG$. Since the open right half-plane is invariant under $z \mapsto z^{-1}$ it suffices to prove that the polynomial 
$$\sum_{S,T}   \prod_{e \in S} y_e \prod_{f \in T} x_f$$
is Hurwitz stable. This polynomial is symmetric in $\xx$ and in $\yy$, so by Theorem \ref{z+w} it remains to prove that the polynomial 
$$
y\sum_{k=0}^n \binom n k \binom m {k+1} x^ky^{k}=:yP(xy)
$$
is Hurwitz stable. The polynomial $P(x)$ has only real and negative zeros by Lemma~\ref{Malo}, so that $P(xy)$ is a product of factors of the form $a + xy$ where $a>0$. The product of two numbers in the open right half-plane is never a negative real number, from which the proof follows. 
\end{proof}

We will now generalize Theorem \ref{fqs} to other weights. 
Define weight functions $w_j : \FQ \rightarrow \CC[x, y,z_1,z_2,\ldots]$ for $2\leq j \leq 4$ by 
\begin{align*}
w_2(\pi) &= y^{|\pi|-\des(\pi)} \prod_{i \in \DB(\pi)} \!\!\!z_i,   \\
w_3(\pi) &= y^{\des(\pi) +1 } \prod_{i \in \AB(\pi)}\!\!\! z_i \, \, \text{and}  \\
w_4(\pi) &= x^{\des(\pi)+1}y^{|\pi|-\des(\pi)}.
\end{align*} 

For a finite set $A$ of indices, let $S_A$ be the symmetrization with respect to the variables indexed by $A$, that is,  
$$
S_A=\frac{1}{|A|!} \sum_{\pi \in \mathfrak{S}(A)} \pi,
$$ 
where $\pi$ acts on the variables of the polynomial by permuting them according to $\pi$.
For a proof of the next lemma see e.g. Theorem 1.2 and Proposition 1.5 in \cite{LYPSII}. 
\begin{lemma}\label{symi}
Let $A \subseteq [n]$ and suppose $f \in \CC[z_1,\ldots, z_n]$ is stable and has degree at most one in $z_i$ for each $i \in A$. Then $S_A(f)$ is stable. 
\end{lemma}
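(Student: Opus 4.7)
The plan is to deduce this directly from the Grace--Walsh--Szeg\H{o} theorem (Theorem \ref{gws}), which has been set up precisely for this kind of reduction.

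First I would record the two elementary properties of $S_A(f)$: it has degree at most one in each variable $z_i$, $i \in A$ (since every $\pi(f)$ does and averaging preserves degree), and it is symmetric in the variables $\{z_i : i \in A\}$ by construction. So $S_A(f)$ satisfies the hypotheses of Theorem \ref{gws} with $k = |A|$ (after relabeling the variables in $A$ as $z_1,\ldots,z_k$).

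The key observation is that diagonalization kills the symmetrization: if we set $z_i = z$ for every $i \in A$ in $S_A(f)$, then for each $\pi \in \mathfrak{S}(A)$ the polynomial $\pi(f)$ evaluated on this diagonal agrees with $f$ evaluated on the same diagonal, because permuting equal values changes nothing. Averaging over $|A|!$ permutations therefore gives
\[
S_A(f)\bigl|_{z_i = z,\, i \in A} = f\bigl|_{z_i = z,\, i \in A}.
\]
Since $f$ is stable, the restriction of $f$ obtained by substituting a single upper half-plane variable $z$ in for every $z_i$ with $i \in A$ is again stable as a polynomial in $z$ and the remaining variables: any upper-half-plane assignment of these new variables corresponds to an upper-half-plane assignment of the original variables, on which $f$ is nonzero.

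Now I would apply Theorem \ref{gws} in the nontrivial direction: since $S_A(f)$ is multi-affine and symmetric in the variables indexed by $A$, and since its diagonal restriction is stable, the polynomial $S_A(f)$ itself is stable. There is really no obstacle to surmount here; the only thing that could go wrong is forgetting to check the multi-affineness hypothesis, which is why we insisted on $\deg_{z_i} f \le 1$ for $i \in A$ in the statement.
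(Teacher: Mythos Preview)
Your argument is correct. The key computation $S_A(f)|_{z_i=z,\, i\in A}=f|_{z_i=z,\, i\in A}$ is right, and since diagonalizing a stable polynomial along upper-half-plane variables preserves stability, Theorem~\ref{gws} (applied in the direction ``stable diagonal $\Rightarrow$ stable polynomial'') yields the conclusion. The multi-affineness of $S_A(f)$ in the $A$-variables, which you checked, is exactly what is needed to invoke that theorem.

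As for comparison: the paper does not actually give a proof of this lemma but instead cites Theorem~1.2 and Proposition~1.5 of \cite{LYPSII}, which go through the general machinery characterizing linear operators that preserve stability. Your route via Grace--Walsh--Szeg\H{o} is more direct and entirely self-contained within the tools already stated in the paper (namely Theorem~\ref{gws}), so it is arguably preferable in this context. The cited results in \cite{LYPSII} have the advantage of being more general---they handle symmetrization without the multi-affine hypothesis, via polarization---but for the lemma as stated your argument is cleaner.
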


\begin{lemma}\label{goin}
Let $\{z_i \}_{i \in F}$ and $\{z_i \}_{i \in G}$ be disjoint set of variables and suppose that $f$ and $g$ are multi-affine polynomials that depend only on the variables indexed by $F$ and $G$, respectively. Let 
$$
\Phi = \sum_{S,T}\eta^T\partial^S,
$$
where the sum is over all $S \subseteq F$, $T \subseteq G$ for which $|S|=|T|+1$. 

If $A \subseteq F$ and $B \subseteq G$, then 
$$
S_AS_B \Phi (fg) = \Phi(S_Af \cdot S_Bg).
$$
\end{lemma}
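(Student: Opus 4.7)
The plan is to decompose $\Phi$ by rank into a sum of products of simpler operators that act on disjoint variable sets, and then observe that each summand commutes with the partial symmetrizations.

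Explicitly, I would write $\Phi = \sum_{k\geq 0} D_k E_k$, where $D_k = \sum_{S\subseteq F,\ |S|=k+1}\partial^S$ and $E_k = \sum_{T\subseteq G,\ |T|=k}\eta^T$. Since $F$ and $G$ are disjoint and $f, g$ are multi-affine in the variables indexed by $F$ and $G$ respectively, $D_k$ acts only on $f$ and $E_k$ only on $g$. Hence $\Phi(fg) = \sum_{k\geq 0}(D_k f)(E_k g)$, and likewise $\Phi(S_Af\cdot S_Bg)=\sum_{k\geq 0}(D_k S_Af)(E_k S_Bg)$. Because $A\subseteq F$ and $B\subseteq G$ involve disjoint variable sets as well, $S_AS_B$ factors across each product, i.e.\ $S_AS_B[(D_kf)(E_kg)] = (S_AD_kf)(S_BE_kg)$.

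The key step is then to verify the commutation relations $S_AD_k = D_kS_A$ and $S_BE_k = E_kS_B$. For this I would note that for any $\sigma\in\mathfrak{S}(A)$, conjugation by $\sigma$ (acting on polynomials by permuting the variables) sends $\partial^S$ to $\partial^{\sigma(S)}$. Since $\sigma$ fixes $F\setminus A$ pointwise, the map $S\mapsto\sigma(S)$ is a bijection on the subsets of $F$ of any given cardinality, so $\sigma D_k \sigma^{-1}=D_k$. Averaging over $\sigma\in\mathfrak{S}(A)$ yields $S_AD_k=D_kS_A$, and an identical argument (with $\eta$ in place of $\partial$) gives $S_BE_k = E_kS_B$.

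Chaining these identities produces
$$S_AS_B\,\Phi(fg) = \sum_k (S_AD_kf)(S_BE_kg) = \sum_k (D_kS_Af)(E_kS_Bg) = \Phi(S_Af\cdot S_Bg),$$
which is the claim. I do not expect a real obstacle here: the content of the lemma is essentially the observation that the cardinality-stratified inner sums $D_k$ and $E_k$ are intrinsically invariant under any partial symmetrization of their variable set, so they automatically commute with $S_A$ and $S_B$.
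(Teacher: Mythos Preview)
Your proof is correct and follows essentially the same approach as the paper: the paper's proof simply invokes the conjugation identities $\pi(\partial^S h)=\partial^{\pi(S)}\pi(h)$ and $\pi(\eta^T h)=\eta^{\pi(T)}\pi(h)$ to observe that $\Phi$ acts symmetrically in the $F$-variables and in the $G$-variables, which is exactly the content of your commutation relations $S_AD_k=D_kS_A$ and $S_BE_k=E_kS_B$. Your rank decomposition $\Phi=\sum_k D_kE_k$ just makes the argument more explicit, but the substance is identical.
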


\begin{proof}
The lemma follows
since $\Phi$ acts symmetrically on the variables in $F$ and $G$, and 
$$
\pi(\partial^S g)= \partial^{\pi(S)} \pi(g) \ \ \mbox{ and } \ \ \pi(\eta^T f)= \eta^{\pi(T)} \pi(f), 
$$ 
where $\pi(S)= \{ \pi(s) : s \in S\}$.
\end{proof}

In Lemma \ref{shudif} we proved that the weight $w_1$ of a product of two elements in $\FQS$ only depend on the weights ($w_1$) of the two elements. In the next lemma we prove the same statement for the weights $w_2, w_3$ and $w_4$.

\begin{lemma}\label{fori}
Let $f,g \in \FQ$ and $1 \leq i \leq 4$. Then $w_i(f\cdot g)$ only depends on 
$w_i(f)$ and $w_i(g)$. 

Moreover if $w_i(f)$ and $w_i(g)$ are homogenous and stable, then so is $w_i(f \cdot g)$. 
\end{lemma}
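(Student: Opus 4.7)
The case $i=1$ is the content of Lemma~\ref{shudif} and Theorem~\ref{fqs}, so my plan is to reduce $i\in\{2,3,4\}$ to this case via a polarization argument. The starting observation is that each $w_i$ for $i\in\{2,3,4\}$ is a specialization of $w_1$: $w_2$ arises by setting every primed variable $z_{e'}$ equal to $y$; $w_3$ by setting every unprimed $z_e$ to $y$ (with the harmless relabeling $z_{e'}\mapsto z_e$); and $w_4$ by setting $z_e=x$ and $z_{e'}=y$. Let $\phi_i$ denote this substitution, so that $w_i=\phi_i\circ w_1$.

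For the dependence statement, I apply $\phi_i$ to the identity $w_1(fg)=\Phi(w_1(f)\,\Gamma_n w_1(g))$ from Lemma~\ref{shudif}. For $f\in\FQ_n$ introduce the polarization $\bar w_1(f):=S_A w_1(f)$, where $A$ ranges over the primed and/or unprimed indices of $f$ that $\phi_i$ identifies (two groups in the case $i=4$). Since $\phi_i$ sends every variable of $A$ to a common value, it is invariant under symmetrization: $\phi_i\circ S_A=\phi_i$. Pulling the symmetrizations from both halves inside $\Phi$ via Lemma~\ref{goin} then yields
\[
w_i(fg)=\phi_i\bigl(\Phi(\bar w_1(f)\cdot\Gamma_n \bar w_1(g))\bigr).
\]
Because $\bar w_1(f)$ is the unique symmetric multi-affine polynomial whose $\phi_i$-specialization equals $w_i(f)$ (the classical polarization), it is determined by $w_i(f)$, and similarly for $g$. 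Thus $w_i(fg)$ depends only on $w_i(f)$ and $w_i(g)$.

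For the stability statement, suppose $w_i(f)$ and $w_i(g)$ are homogeneous and stable. By the Grace--Walsh--Szeg\H{o} theorem (Theorem~\ref{gws}), this is equivalent to the stability of the polarizations $\bar w_1(f)$ and $\bar w_1(g)$. Their product lies in disjoint variables and is therefore stable. The proof of Theorem~\ref{fqs} actually shows that $\Phi$ preserves stability on all multi-affine polynomials (via the symbol computation of Theorem~\ref{z+w}), so $\Phi(\bar w_1(f)\,\Gamma_n\bar w_1(g))$ is stable. Finally, $\phi_i$ merely substitutes certain variables by a common new variable, a linear substitution with nonnegative coefficients, and so preserves stability. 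Hence $w_i(fg)$ is stable.

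The main obstacle will be the polarization setup: one must verify that $\bar w_1(f)$ is indeed the polarization of $w_i(f)$, so that Grace--Walsh--Szeg\H{o} supplies the two-way stability equivalence, and that Lemma~\ref{goin} genuinely applies in order to commute the symmetrizations past $\Phi$. For $w_4$ one polarizes in two separate groups (primed and unprimed), but the argument is structurally identical.
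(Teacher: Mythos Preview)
Your plan is correct and is essentially the paper's own argument: both reduce $w_i$ to $w_1$ by symmetrizing (polarizing) the identified variables, push the symmetrizations through $\Phi$ via Lemma~\ref{goin}, invoke Grace--Walsh--Szeg\H{o} for the stability equivalence, and use that the symbol computation in the proof of Theorem~\ref{fqs} shows $\Phi$ preserves stability on multi-affine inputs. The only cosmetic difference is that the paper keeps an outer symmetrization $S_{[1,n+m]}S_{[1,n+m]'}$ and then appeals to Lemma~\ref{symi}, whereas you apply the specialization $\phi_i$ directly (which absorbs that symmetrization and preserves stability trivially); neither route adds or loses anything substantive.
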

\begin{proof}
We prove the lemma for $i=4$. The other cases follow similarly. 
Let $f \in \FQ_n$ and $g \in \FQ_m$. For  $1\leq i \leq j$ let  $[i,j]=\{i,i+1, \ldots, j\}$ and  $[i,j]'= \{i', \ldots, j'\} $. Note that  $w_4(f \cdot g)$ carries precisely the same information as $S_{[1,n+m]} S_{[1,n+m]'} w_1(f \cdot g)$. Now, by applying Lemma \ref{goin} 
\begin{align*}
& S_{[1,n+m]} S_{[1,n+m]'} w_1(f\cdot g) = S_{[1,n+m]} S_{[1,n+m]'} \Phi \big(w_1(f)  \Gamma_n( {w_1(g)}) \big) \\ 
= & S_{[1,n+m]} S_{[1,n+m]'} S_{[1,n]} S_{[1,n]'}S_{[n+1,n+m]} S_{[n+1,n+m]'}\Phi \big(w_1(f)\Gamma_n({w_1(g)})\big) \\
= & S_{[1,n+m]} S_{[1,n+m]'} \Phi \big( FG\big),
\end{align*} 
where  $F= S_{[1,n]} S_{[1,n]'}w_1(f)$ and $G= S_{[n+1,n+m]} S_{[n+1,n+m]'} \Gamma_n({w_1(g)})$. This proves the first statement. 

Now, $F$ is stable if and only if $w_4(f)$ is stable by Theorem \ref{gws}. 
 This completes the proof by Theorem \ref{fqs} and Lemma \ref{symi}. 
\end{proof}

\section{Applications}
For a labeled poset $P$ we define a \emph{descent bottom $P$-Eulerian polynomial} by 
$$
A^{\rm  DB}_P(\zz) = \sum_{\pi \in \JJ(P)} \prod_{e \in \DB(\pi)} \!\!\!z_e.
$$

\begin{corollary}\label{dbprod}
Let $P$ and $Q$ be labeled posets on $[n]$ and $[m]$, respectively. Suppose  $A^{\rm  DB}_P(\zz)$ and $A^{\rm  DB}_Q(\zz)$ are stable. If $S \subset [n+m]$ is an $n$-set, then the polynomial $A^{\rm  DB}_{P\sqcup_S Q}(\zz)$ is stable. 
\end{corollary}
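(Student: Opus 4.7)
The plan is to apply Lemma \ref{fori} at $i=2$ to the homogeneous weight $w_2(\pi) = y^{n-\des(\pi)} \prod_{e \in \DB(\pi)} z_e$, recover $A^{\rm DB}_P$ via the specialization $y = 1$, and then pass from $\sqcup$ to $\sqcup_S$ by the variable-swap argument used in the proof of Corollary \ref{label}.

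The key preliminary step is to show that $A^{\rm DB}_P(\zz)$ is stable if and only if $w_2(f_P)(y, \zz)$ is stable, where $f_P = \sum_{\pi \in \JJ(P)} \pi$. Since $|\DB(\pi)| = \des(\pi) + 1$, one has
$$
w_2(f_P)(y, \zz) = y^{n+1} A^{\rm DB}_P(\zz/y),
$$
which is homogeneous of degree $n+1$ in $(y, \zz)$. The ``only if'' direction is Hurwitz's theorem applied at $y = 1$. For the converse---the main obstacle---I would polarize $y$ into $n$ auxiliary variables $y_1, \ldots, y_n$ by replacing $y^k$ with $e_k(y_1, \ldots, y_n)/\binom{n}{k}$; the resulting polynomial $\tilde w(\yy, \zz)$ is multi-affine and symmetric in the $y_i$, so by Grace--Walsh--Szeg\H{o} (Theorem \ref{gws}) its stability is equivalent to that of $w_2(f_P)$, while $\tilde w|_{y_i = 1}$ recovers $A^{\rm DB}_P$. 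Closing the loop requires deducing stability of $\tilde w$ from that of its specialization $\tilde w|_{y_i=1}$, which in turn needs to exploit the specific structure of $A^{\rm DB}_P$ as the generating polynomial of descent-bottom sets over the linear extensions of $P$.

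Once this bridge is established, the identity $f_{P \sqcup Q} = f_P \cdot f_Q$ in $\FQS$ (valid because $\JJ(P \sqcup Q)$ is precisely the set of shuffles of linear extensions of $P$ and $Q$) together with Lemma \ref{fori} at $i=2$ gives stability of $w_2(f_{P \sqcup Q})$. Specializing $y = 1$ then produces the stable polynomial $A^{\rm DB}_{P \sqcup Q}(\zz)$ by Hurwitz's theorem; the specialization is not identically zero, since every linear extension contributes a nonzero monomial.

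To pass from $A^{\rm DB}_{P \sqcup Q}$ to $A^{\rm DB}_{P \sqcup_S Q}$, I would replicate the proof of Corollary \ref{label} at the descent-bottom level: choosing $s \in S$ and $t = s - 1 \in [n+m] \setminus S$ as in that argument, the swap $s \leftrightarrow t$ induces a bijection between $\JJ(P\sqcup_S Q)$ and $\JJ(P\sqcup_{\tilde S} Q)$ (with $\tilde S = (S \setminus \{s\}) \cup \{t\}$) that either fixes $\pi$ (when $s,t$ occupy adjacent positions of $\pi$) or simply exchanges $z_s$ and $z_t$ in the monomial $\prod_{e \in \DB(\pi)} z_e$, since $s$ and $t$ are unrelated in the disjoint union. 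Hence $A^{\rm DB}_{P \sqcup_S Q}$ and $A^{\rm DB}_{P \sqcup_{\tilde S} Q}$ differ by a permutation of variables---which preserves stability---and iterating terminates at $\tilde S = [n]$.
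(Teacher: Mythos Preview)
Your overall plan coincides with the paper's: reduce $\sqcup_S$ to $\sqcup$ by the variable-swap argument of Corollary~\ref{label}, pass from $A^{\rm DB}_P$ to the homogeneous weight $w_2(f_P)$, apply Lemma~\ref{fori} at $i=2$, and specialize $y=1$. The one substantive gap is precisely the step you label ``the main obstacle'': deducing stability of $w_2(f_P)(y,\zz)=y^{\,n+1}A^{\rm DB}_P(\zz/y)$ from stability of $A^{\rm DB}_P(\zz)$. Your polarization maneuver does not close this loop. Grace--Walsh--Szeg\H{o} tells you that $\tilde w(\yy,\zz)$ is stable \emph{iff} $w_2(f_P)$ is, but you are still left needing to pass from stability of the specialization $\tilde w|_{y_i=1}=A^{\rm DB}_P$ back up to stability of $\tilde w$, and this reverse implication is not free. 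Your appeal to ``the specific structure of $A^{\rm DB}_P$'' is a placeholder, not an argument; nothing about descent-bottom sets per se makes the step work.

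What is actually needed---and what the paper invokes---is the general fact, \cite[Theorem~4.5]{BBL}, that the homogenization of a multiaffine real stable polynomial with \emph{nonnegative coefficients} is again stable. Nonnegativity is essential: $z-1$ is stable but its homogenization $z-y$ vanishes at $z=y=i$. Since $A^{\rm DB}_P$ manifestly has nonnegative coefficients, the citation applies and yields stability of $w_2(f_P)$ directly; once you insert this, your proof is complete and identical to the paper's. (Minor point: you have the ``if'' and ``only if'' directions swapped---it is the implication $w_2(f_P)$ stable $\Rightarrow$ $A^{\rm DB}_P$ stable that follows from Hurwitz at $y=1$, while the converse is the hard homogenization direction.)
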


\begin{proof}
As in the proof of Corollary \ref{label} it suffices to consider the case when $S=[n]$. Suppose $A^{\rm  DB}_P(\zz)$ and $A^{\rm  DB}_Q(\zz)$ are stable. Then so are the homogenized polynomials 
$$
\sum_{\pi \in \JJ(P)} y^{n-\des(\pi)}\prod_{e \in \DB(\pi)} z_e \ \  \mbox{ and } \ \ \sum_{\pi \in \JJ(Q)} y^{m-\des(\pi)}\prod_{e \in \DB(\pi)} z_e 
$$
by \cite[Theorem 4.5]{BBL}. The corollary now follows from Lemma \ref{fori}. 
\end{proof}
Brenti \cite{Brenti} conjectured that if the univariate $P$-Eulerian polynomials $A_P(x)$ and $A_Q(x)$ of two labeled posets $P$ and $Q$ are real-rooted, then so is $A_{P \sqcup Q}(x)$. The conjecture was proved by Wagner in \cite{Wa1,Wa2}. We may now deduce it as an immediate corollary of Lemma \ref{fori}. 
\begin{corollary}
If $P$ and $Q$ are labeled posets such that $A_P(x)$ and $A_Q(x)$ are real-rooted, then so is $A_{P \sqcup Q}(x)$.
\end{corollary}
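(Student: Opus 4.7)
The plan is to reduce the real-rootedness of $A_{P \sqcup Q}(x)$ to the stability statement of Lemma \ref{fori} applied to the weight $w_4$, via the observation that disjoint union of posets corresponds to the shuffle product in $\FQ$.

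First, I would associate to each labeled poset $P$ on $[n]$ the $\FQ$-element
$$[P] := \sum_{\pi \in \JJ(P)} \pi \in \FQ_n,$$
and verify the basic identity
$$[P] \cdot [Q] = [P \sqcup Q]$$
in $\FQ$. This follows because the shuffle product was defined for basis elements by $\pi \shuffle \sigma = \sum_{\tau \in \JJ(P_\pi \sqcup P_\sigma)} \tau$, and every linear extension of $P \sqcup Q$ arises uniquely as an interleaving of a linear extension of $P$ with a shifted linear extension of $Q$; so summing over all $\pi \in \JJ(P)$ and $\sigma \in \JJ(Q)$ produces each element of $\JJ(P \sqcup Q)$ exactly once.

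Next I would observe that
$$w_4([P]) = \sum_{\pi \in \JJ(P)} x^{\des(\pi)+1} y^{|P|-\des(\pi)}$$
is precisely the homogenization of $A_P(x)$ (in the extra variable $y$), and recall the standard fact that a homogeneous bivariate polynomial is stable if and only if its dehomogenization at $y=1$ is a real-rooted polynomial with nonnegative coefficients. Hence, assuming $A_P(x)$ and $A_Q(x)$ are real-rooted, both $w_4([P])$ and $w_4([Q])$ are homogeneous and stable.

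Finally, I would invoke Lemma \ref{fori} in the case $i=4$: since $w_4([P])$ and $w_4([Q])$ are homogeneous and stable, the weight $w_4([P] \cdot [Q]) = w_4([P \sqcup Q])$ is also stable. This polynomial is the homogenization of $A_{P \sqcup Q}(x)$, whose dehomogenization at $y=1$ is therefore real-rooted. The main (essentially only) conceptual step is recognizing that the shuffle product in $\FQ$ realizes the disjoint union of labeled posets at the level of linear extensions; everything else is packaging, and no additional obstacle arises.
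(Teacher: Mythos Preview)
Your proposal is correct and matches the paper's own argument essentially line for line: homogenize $A_P(x)$ and $A_Q(x)$ to obtain $w_4([P])$ and $w_4([Q])$, observe these are stable because the univariate polynomials have nonnegative coefficients and only real zeros, apply Lemma~\ref{fori} with $i=4$, and dehomogenize. The paper phrases this as ``the proof follows as the proof of Corollary~\ref{dbprod}, using $w_4$ instead of $w_2$''; your write-up simply makes explicit the identity $[P]\cdot[Q]=[P\sqcup Q]$ that the paper uses implicitly throughout.
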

\begin{proof}
The proof follows as the proof of Corollary \ref{dbprod}, using $w_4$ instead of $w_2$. 
\end{proof}

Stembridge \cite{Stem1,Stem2} studied the \emph{peak polynomial} associated to a labeled poset $P$. A peak in a permutation 
$\pi \in \sym_n$ is an index $1<i<n$ such that $\pi_{i-1} < \pi_i > \pi_{i+1}$. Let $\Lambda(\pi)$  be the set of peaks of $\pi$ and define 
$$
\bar{A}_P(x)= \sum_{\pi \in \JJ(P)} x^{|\Lambda(\pi)|}.
$$
Let us now define  a \emph{multivariate peak polynomial}.
For $\pi \in \sym_n$ let 
$$
N(\pi) = \{ \pi_i : \pi_{i-1} < \pi_i >\pi_{i+1}, i \in [n] \}\cup \{ \pi_i : \pi_{i-1} > \pi_i < \pi_{i+1}, i \in [n] \}, 
$$ 
where $\pi_0=\pi_{n+1}=\infty$, 
be the \emph{peak-valley set} of $\pi$ and define a multivariate peak polynomial by
$$
\bar{A}_P(\zz):= \sum_{\pi \in \JJ(P)} \prod_{e \in N(\pi)}  \! \! \!  z_e. 
$$
Note that $|N(\pi)|= 2|\Lambda(\pi)|+1$ so that 
$$
\bar{A}_P(x,x,\ldots)=x \bar{A}_P(x^2).
$$
Recall that a polynomial $P(\zz) \in \CC[\zz]$ is said to be \emph{Hurwitz stable} if $P(\zz) \ne 0$ for all $\zz \in \CC^n$ with $\Re(z_i)\geq 0,  $ for all $1\leq i \leq n$.
\begin{proposition}
Let $P$ be a labeled poset. If $A_P(\zz)$ is stable, then $\bar{A}_P(\zz)$ is Hurwitz stable and $\bar{A}_P(x)$ is real-rooted. 
\end{proposition}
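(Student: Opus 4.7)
The plan is to chain together stability-preserving operations leading from $A_P(\zz)$ (in $2n$ variables) down to $\bar A_P(\zz)$ (in $n$ variables). The first step exploits that $A_P$ is homogeneous of degree $n+1$: such polynomials satisfy $A_P(i\zz) = i^{n+1} A_P(\zz)$, so stability on the open upper half-plane is equivalent to Hurwitz stability on the open right half-plane. Applying the diagonal identification $z_{e'} = z_e$ for each $e$, a substitution that preserves Hurwitz stability, produces
$$R(\zz) := A_P(z_1, z_1, \ldots, z_n, z_n) = \sum_{\pi \in \JJ(P)} \prod_{e = 1}^n z_e^{d_\pi(e)},$$
where $d_\pi(e) := [e \in \DB(\pi)] + [e \in \AB(\pi)] \in \{0, 1, 2\}$; a quick case check shows $d_\pi(e) = 0, 2, 1$ according as $e$ is a peak, a valley, or neither in $\pi$. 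Thus $R$ is Hurwitz stable and of degree at most $2$ in each variable.

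The key observation is that $\bar A_P$ arises from $R$ by applying, in each variable separately, the linear operator
$$T \colon \CC[t]_{\leq 2} \to \CC[z]_{\leq 1}, \qquad T(a + bt + ct^2) = (a + c)z + b.$$
Since $T(1) = T(t^2) = z$ and $T(t) = 1$, the substitution $t_e^{d_\pi(e)} \mapsto T(t_e^{d_\pi(e)})$ produces $z_e$ exactly when $d_\pi(e) \in \{0, 2\}$, i.e., when $e \in N(\pi)$. Summing over $\pi$ recovers $\bar A_P(\zz)$; the operators $T_{t_e}$ commute across distinct variables, so the order of application is irrelevant.

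The heart of the argument, and the main obstacle, is to show that $T$ preserves Hurwitz stability. The symbol criterion from Theorem~\ref{z+w} does not apply directly---the symbol of $T$ fails to be stable on the relevant polydomain---so the Hurwitz-stable structure of the input must be used. The key lemma to prove is: if $p(t, \ww) \in \CC[t, \ww]$ is Hurwitz stable of degree at most $2$ in $t$, then $|p(1, \ww)| \geq |p(-1, \ww)|$ whenever $\Re(w_j) > 0$ for every $j$. This follows by factoring $p$ in $t$ as $c(\ww) \prod_i (t - t_i)$ with each $\Re(t_i) \leq 0$ and applying the elementary identity $|1-t|^2 - |1+t|^2 = -4\Re(t) \geq 0$. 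One then rewrites
$$(T_t p)(z, \ww) = \frac{z+1}{2}\bigl(p(1, \ww) + w \, p(-1, \ww)\bigr), \qquad w = \frac{z-1}{z+1},$$
noting that the Cayley transform $z \mapsto w$ maps the open right half-plane bijectively onto the open unit disc. Any zero of $T_t p$ with $\Re(z) > 0$ would then require $|w| = |p(1, \ww)/p(-1, \ww)| \geq 1$, contradicting $|w| < 1$ (the edge case $p(-1, \ww) = 0$ is handled using $p(1, \ww) \neq 0$, which follows from Hurwitz stability since $\Re(1) > 0$). Iterating $T$ in all $n$ variables of $R$ yields Hurwitz stability of $\bar A_P(\zz)$.

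Finally, real-rootedness of the univariate $\bar A_P(x)$ follows by specialization: $\bar A_P(x, \ldots, x) = x \bar A_P(x^2)$ is nonzero for $\Re(x) > 0$ by the Hurwitz stability just proved, so its zeros in $x$ lie in the closed left half-plane. A positive real or nonreal zero $y_0$ of $\bar A_P(y)$ would yield a zero $\sqrt{y_0}$ of $x \bar A_P(x^2)$ with $\Re > 0$, so every zero of $\bar A_P(y)$ must be real and nonpositive.
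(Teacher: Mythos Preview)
Your argument is correct and follows the same skeleton as the paper's: pass from upper-half-plane stability to Hurwitz stability by homogeneity, diagonalize $z_{e'}=z_e$, apply a degree-reducing operator in each variable, and then specialize for real-rootedness. The difference lies in the third step. The paper applies the mod-$2$ exponent reduction $\Psi\colon \prod_e z_e^{k_e}\mapsto \prod_e z_e^{k_e\bmod 2}$ and cites \cite[Proposition~4.19]{COSW} for the fact that $\Psi$ preserves Hurwitz stability; you instead use the operator $T\colon a+bt+ct^2\mapsto (a+c)z+b$ and give a self-contained proof via the Cayley transform and the elementary inequality $|p(1,\ww)|\ge |p(-1,\ww)|$ for Hurwitz-stable $p$. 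The two operators are related by multi-affine reversal, $T(p)(z)=z\,\Psi(p)(1/z)$ on inputs of degree $\le 2$, so the two routes are equivalent in strength. Your $T$ has the minor advantage of producing $\bar A_P$ on the nose: strictly speaking $\Psi(H)=\sum_{\pi\in\JJ(P)}\prod_{e\notin N(\pi)}z_e$, the multi-affine reverse of $\bar A_P$, so the paper's displayed identity $\bar A_P(\zz)=\Psi(H(\zz))$ is off by a reversal, though the conclusion is unaffected since reversal also preserves Hurwitz stability.

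One small inaccuracy in your write-up, harmless to the proof: the aside that ``the symbol of $T$ fails to be stable on the relevant polydomain'' is not right. The degree-$2$ symbol is
\[
G_T(z,w)=T\big[(t+w)^2\big]=(1+w^2)z+2w,
\]
and for $\Re(w)>0$ its unique zero $z=-2w/(1+w^2)$ has $\Re(z)=-2\Re(w)(|w|^2+1)/|1+w^2|^2<0$, so $G_T$ \emph{is} Hurwitz stable. What is true is that Theorem~\ref{z+w} as stated in the paper is for multi-affine inputs and hence does not apply directly; your direct argument stands on its own regardless.
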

\begin{proof}
If  $A_P(\zz)$ is stable, then $A_P(\zz)$ is Hurwitz stable by homogeneity. 
Set $z_{e'}=z_e$ for all $e \in E$ in $A_P(\zz)$ and denote the resulting polynomial by $H(\zz)$. Then 
$$
\bar{A}_P(\zz) = \Psi\left(H(\zz)\right),
$$
where $\Psi$ is the linear operator that maps a monomial $\prod_{e \in \EE}z_e^{k_e}$ to $\prod_{e \in \EE}z_e^{k_e \! \! \! \mod 2}$, where $k_e \! \!  \mod 2=0$ if $k_e$ is even, and 
$k_e \! \!  \mod 2=1$ if $k_e$ is odd. The polynomial $\bar{A}_P(\zz)$ is Hurwitz stable since $\Psi$ preserves Hurwitz stability, see \cite[Proposition4.19]{COSW}.  Since $\bar{A}_P(\zz)$ is Hurwitz stable we may rotate the variables and deduce that 
$$
\bar{A}_P(ix,ix,\cdots)= ix\bar{A}_P(-x^2), \ \ \ \ (\mbox{where } i = \sqrt{-1}),
$$
is stable. Hence $\bar{A}_P(x)$ is real-rooted. 
\end{proof}

An natural question, which is not addressed in this paper, is whether $\bar{A}_{P \sqcup Q}$ is Hurwitz stable whenever $\bar{A}_P$ and $\bar{A}_Q$ are Hurwitz stable, for any two labeled posets $P$ and $Q$.

Let $P=([n],\preceq_P)$ and $Q([m],\preceq_Q)$ be two labeled posets. We define the \emph{ordinal sum} of $P$ and $Q$ to be the labeled poset $P\oplus Q=([n+m],\preceq)$ with the following set of relations:
\begin{align*}
\{ i \preceq j :  i, j \in [n] \mbox{ and } i \preceq_P j\} \cup \\
\{ (n+i) \preceq (n+j) : i, j \in [m] \mbox{ and } i \preceq_Q j\} \cup \\
\{ i \preceq (n+j): i\in [n], j \in [m] \}. 
\end{align*}
 
In Section \ref{disjoint} we saw the effect on multivariate Eulerian polynomials upon taking disjoint unions. Now we will study the effect for ordinal sums. 
For a labeled poset $P=([n], \preceq_P)$ define $P_0=([n+1],\preceq_{P_0})$ to be the poset where $1 \preceq_{P_0} j$ for $2\leq j \leq n+1$ and  $i\preceq_{P_0}j$ if $i-1 \preceq_P j-1$ for $i,j \in \{2,\ldots, n+1\}$.

\begin{lemma}
\label{oplus}
Let $P=([n],\preceq_P)$ be a poset such that $A_P(\zz)$ is stable and let $Q$ be a poset such that $A_{Q_0}(\zz)$ is stable. Then $A_{P \oplus Q}(\zz)$ is stable. 
\end{lemma}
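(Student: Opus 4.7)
The plan is to exhibit an explicit factorization
$$
A_{P\oplus Q}(\zz) \;=\; A_P(\zz)\,\cdot\,\Gamma_{n-1}\!\Bigl(\tfrac{A_{Q_0}(\zz)}{z_1 z_{1'}}\Bigr)
$$
into two stable polynomials in disjoint sets of variables, and then observe that a product of stable polynomials is stable.

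First I would set up a bijection between $\mathscr{L}(P\oplus Q)$ and $\mathscr{L}(P)\times\mathscr{L}(Q_0)$. Writing $|Q|=m$, any $\tau\in\mathscr{L}(Q_0)$ must begin with its forced minimum $\tau_1=1$, so $\tau=1\,\tau_2\cdots\tau_{m+1}$, and the map
$$
(\sigma,\tau)\;\longmapsto\;\pi \;:=\; \sigma_1\cdots\sigma_n\,(\tau_2+n-1)\cdots(\tau_{m+1}+n-1)
$$
sends $\mathscr{L}(P)\times\mathscr{L}(Q_0)$ bijectively onto $\mathscr{L}(P\oplus Q)$.

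Next I would verify the corresponding weight identity $w_1(\pi)=w_1(\sigma)\cdot\Gamma_{n-1}\!\bigl(w_1(\tau)/(z_1 z_{1'})\bigr)$. The inequality $\sigma_n\leq n<n+1\leq \tau_2+n-1$ forces an ascent at the boundary of $\pi$, so $\sigma_n\in\AB(\pi)$; this contributes $\sigma_n'$ to $B(\pi)$, exactly matching the last-slot ascent in $B(\sigma)$. Meanwhile the two forced contributions $1\in\DB(\tau)$ (from $\tau_0=\infty$) and $1'\in B(\tau)$ (from $\tau_1<\tau_2$) have no counterpart in $\pi$, since $\tau_1=1$ is not a letter of $\pi$; this is exactly what the division by $z_1 z_{1'}$ is there to remove. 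All remaining descent and ascent bottoms are inherited from $\sigma$ and from the shifted tail $\tau_2\cdots\tau_{m+1}$. Summing the weight identity over the bijection yields the displayed factorization.

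Finally I would conclude as follows. $A_P(\zz)$ involves only variables indexed by $[n]\cup[n]'$, while $\Gamma_{n-1}(A_{Q_0}(\zz)/(z_1 z_{1'}))$ involves only variables indexed by $[n{+}1,n{+}m]\cup[n{+}1,n{+}m]'$, so the two variable sets are disjoint. The factor $A_{Q_0}(\zz)/(z_1 z_{1'})$ is a genuine polynomial since $z_1 z_{1'}$ divides every monomial of $A_{Q_0}(\zz)$, and it is stable because if it vanished at some point in the upper half-plane, then so would $A_{Q_0}(\zz)$, contradicting the hypothesis. The relabeling $\Gamma_{n-1}$ preserves stability, and a product of stable polynomials is trivially stable, so $A_{P\oplus Q}(\zz)$ is stable. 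The main obstacle is the middle step: although entirely mechanical, it requires a careful treatment of the four boundary positions $1$, $n$, $n+1$, and $n+m$ in $\pi$, matching each against the correct slot of $\sigma$ or $\tau$ to be certain that the stated formula is right on the nose.
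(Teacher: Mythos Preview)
Your proposal is correct and follows essentially the same approach as the paper: exhibit the explicit factorization of $A_{P\oplus Q}(\zz)$ as the product of $A_P(\zz)$ and a shifted copy of $A_{Q_0}(\zz)/(z_1z_{1'})$, then use that a product of stable polynomials in disjoint variable sets is stable. Your index bookkeeping (using $\Gamma_{n-1}$ rather than $\Gamma_n$) is in fact the correct one, as one checks on the example $P=Q=([1],\emptyset)$; the paper's displayed formula has an off-by-one slip in the shift, though this does not affect the stability argument.
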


\begin{proof}
Clearly 
$$\mathscr{L}(P \oplus Q)=\{ \pi_1 \cdots \pi_n (\sigma_{1}+n) \cdots (\sigma_{m}+n) : \pi_1 \cdots \pi_n\in \mathscr{L}(P),  \sigma_1 \cdots \sigma_m \in     \mathscr{L}(Q) \}  
$$ 
and $ \mathscr{L}(Q_0)= \{1      (\sigma_{1}+1) \cdots (\sigma_{m}+1)  : \sigma_1 \cdots \sigma_m \in     \mathscr{L}(Q)   \}$ from which 
$$A_{P \oplus Q}(\zz) = \frac{A_P(\zz) \Gamma_{n}(A_{Q_0}(\zz)) }{z_{n+1}z_{(n+1)'}}$$
follows. Thus $A_{P \oplus Q}(\zz)$ is stable. 
\end{proof}

 Define a \emph{naturally labeled decreasing tree}, $T$, recursively as follows.
 \begin{itemize}
 \item[1)] Either $T=T_0:=(\{1\},\emptyset)$, the antichain on one element, or
 \item[2)]$T=(T_1 \sqcup_{S_1}T_2 \sqcup_{S_2} \cdots \sqcup_{S_{m-1}}T_m) \oplus T_0$,  for some ordered partition $S_1 \cup \cdots \cup S_m=[n]$, where $T_i$ is a naturally labeled decreasing tree for all $i \in [m]$. 
 \end{itemize}
 That is, a naturally labeled decreasing tree is a labeled poset whose Hasse diagram is a decreasing tree with the root at the top.
 
 \begin{example}
 Let $T_1$ and $T_2$ be the naturally labeled decreasing trees below.
 \begin{center}
 \begin{tikzpicture}
  [scale=.4,auto=left,every node/.style={circle}]
\node (n5) at (-2,2) {$T_1=$};
  \node (n1) at (0,0) {2};
  \node (n2) at (2,4)  {3};
  \node (n3) at (4,0)  {1};
  
  \node (n5) at (8,2) {$T_2=$};
   \node (n5) at (10,0) {1};
   \node (n6) at (10,4) {2};
  
  \foreach \from/\to in {n1/n2, n2/n3,  n5/n6}
    \draw (\from) -- (\to);
   
\end{tikzpicture}
 \end{center}
 Then
  \begin{center}
 \begin{tikzpicture}
  [scale=.4,auto=left,every node/.style={circle}]
\node (n5) at (-4,4) {$(T_1\sqcup_{\{1,2,3\}}T_2) \oplus T_0=$};
  \node (n1) at (0,0) {2};
  \node (n2) at (2,4)  {3};
  \node (n3) at (4,0)  {1};

   \node (n5) at (8,0) {4};
   \node (n6) at (8,4) {5};
  
  \node (n7) at (5,8) {6};
  
  \foreach \from/\to in {n1/n2, n2/n3,  n5/n6, n2/n7, n6/n7}
    \draw (\from) -- (\to);
   
\end{tikzpicture}
 \end{center}
 
 \end{example}
 A \emph{naturally labeled decreasing forest} is a disjoint union 
 $$F=T_1 \sqcup_{S_1} T_2 \sqcup_{S_2} \cdots \sqcup_{S_{k-1}} T_k$$ of naturally labeled decreasing trees. 
\begin{corollary}
If $F$ is a naturally labeled decreasing forest, then $A_F(\zz)$ is stable.
\end{corollary}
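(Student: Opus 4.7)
The plan is a straightforward induction on $|T|$, using the recursive description of naturally labeled decreasing trees, followed by one application of Corollary \ref{label} to pass from trees to forests. All the heavy lifting has already been done in Lemma \ref{oplus} and Corollary \ref{label}, so the job here is essentially to verify that the recursion bottoms out cleanly.

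For the base case $T=T_0$, the unique linear extension is the permutation $1$, so $A_{T_0}(\zz)=z_1 z_{1'}$ is a single monomial and hence trivially stable. For the inductive step, write
$$T=(T_1 \sqcup_{S_1} T_2 \sqcup_{S_2} \cdots \sqcup_{S_{m-1}} T_m)\oplus T_0$$
and set $P=T_1 \sqcup_{S_1} \cdots \sqcup_{S_{m-1}} T_m$. Each $T_i$ has strictly fewer vertices than $T$, so by the inductive hypothesis $A_{T_i}(\zz)$ is stable; iterating Corollary \ref{label} then shows that $A_P(\zz)$ is stable. To invoke Lemma \ref{oplus} with $Q=T_0$, I need to know $A_{(T_0)_0}(\zz)$ is stable. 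But $(T_0)_0$ is the two-element chain $1\prec 2$, and its only linear extension is $12$, whence $A_{(T_0)_0}(\zz)=z_1 z_{1'} z_{2'}$, another monomial. Lemma \ref{oplus} now yields stability of $A_{P\oplus T_0}(\zz)=A_T(\zz)$, completing the induction for trees.

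Finally, for a naturally labeled decreasing forest $F=T_1\sqcup_{S_1}\cdots\sqcup_{S_{k-1}}T_k$, each $A_{T_i}(\zz)$ is stable by the tree case, and $k-1$ applications of Corollary \ref{label} give that $A_F(\zz)$ is stable.

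There is no genuine obstacle: the combinatorial and analytic content has been fully absorbed into Corollary \ref{label} and Lemma \ref{oplus}. The only thing to watch for is making sure the recursion is actually bootstrapped by \emph{two} trivial monomials, namely $z_1 z_{1'}$ (giving the base case for trees) and $z_1 z_{1'} z_{2'}$ (giving the hypothesis on $Q_0$ needed to apply Lemma \ref{oplus} at each step), after which both the ``$\oplus T_0$'' and the ``$\sqcup_{S}$'' operations preserve stability by the previously established results.
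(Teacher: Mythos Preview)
Your proof is correct and follows exactly the same approach as the paper's own proof, which simply says that the operations defining naturally labeled decreasing trees and forests preserve stability by Corollary~\ref{label} and Lemma~\ref{oplus}, and concludes by induction on $|F|$. You have spelled out the base cases $A_{T_0}(\zz)=z_1z_{1'}$ and $A_{(T_0)_0}(\zz)=z_1z_{1'}z_{2'}$ explicitly, which the paper leaves implicit, but the structure of the argument is identical.
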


\begin{proof}
The operations defining naturally labeled decreasing trees and forests preserve stability by Corollary \ref{label} and Lemma \ref{oplus}. Hence the corollary follows by induction on the size of $F$. 
\end{proof}
The \emph{dual} of a labeled poset $P=([n], \preceq)$ is the poset $P^*=([n], \preceq^{*})$,  where $a\preceq^* b$ if and only if $b \preceq a$.

\begin{proposition}
\label{dual}
If $P=([n],\preceq)$ is a poset such that $A_P(\zz)$ is stable, then $A_{P^*}(\zz)$ is stable. 
In fact, $A_{P^*}(z_1,\ldots z_n , z_{1'},\ldots , z_{n'})= A_P (z_{1'},\ldots z_{n'},z_1,\ldots , z_n).$ 
\end{proposition}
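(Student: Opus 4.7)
The plan is to establish the claimed identity $A_{P^*}(z_1,\ldots,z_n,z_{1'},\ldots,z_{n'})=A_P(z_{1'},\ldots,z_{n'},z_1,\ldots,z_n)$ directly from the definitions, by exhibiting a bijection $\mathscr{L}(P)\to\mathscr{L}(P^*)$ that realizes the variable swap $z_e\leftrightarrow z_{e'}$ on the monomial level. Once that identity is in hand, stability of $A_{P^*}(\zz)$ is immediate, since stability is obviously preserved under any permutation of the variables.

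First I would use the reversal map $\pi=\pi_1\cdots\pi_n\mapsto \pi^r:=\pi_n\pi_{n-1}\cdots\pi_1$. Since $a\preceq b$ in $P$ iff $b\preceq^* a$ in $P^*$, one checks directly that $\pi\in\mathscr{L}(P)$ iff $\pi^r\in\mathscr{L}(P^*)$; so this yields a bijection between the two Jordan--H\"older sets.

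Next I would track how the sets $\DB$ and $\AB$ transform under reversal, using the convention $\pi_0=\pi_{n+1}=\infty$. Setting $\sigma=\pi^r$, the condition $\sigma_{i-1}>\sigma_i$ translates into $\pi_{n+2-i}>\pi_{n+1-i}$, equivalently $\pi_j<\pi_{j+1}$ with $j=n+1-i$; so descent bottoms of $\sigma$ at interior positions correspond exactly to ascent bottoms of $\pi$ at interior positions. The two boundary cases take care of themselves: $\pi_n\in\AB(\pi)$ corresponds to $\sigma_1\in\DB(\sigma)$ (both forced by the $\infty$ convention), and symmetrically $\pi_1\in\DB(\pi)$ corresponds to $\sigma_n\in\AB(\sigma)$. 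Performing the same check for ascent bottoms of $\sigma$ yields
\[
\DB(\pi^r)=\AB(\pi)\qquad\text{and}\qquad \AB(\pi^r)=\DB(\pi).
\]

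Substituting into the definition of $w_1$ gives
\[
w_1(\pi^r)=\prod_{e\in\AB(\pi)} z_e \prod_{e\in\DB(\pi)} z_{e'},
\]
which is precisely $w_1(\pi)$ with every $z_e$ exchanged with the corresponding $z_{e'}$. Summing over $\pi\in\mathscr{L}(P)$ produces the claimed identity, and the stability of $A_{P^*}(\zz)$ follows because a bijective relabeling of the variable set sends stable polynomials to stable polynomials.

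The only potentially tricky step is the bookkeeping in the second paragraph: the $\infty$-boundary convention must be used to verify that the first and last letters of $\pi$ (which are automatically a descent bottom and an ascent bottom, respectively) line up correctly with the first and last letters of $\pi^r$. This is a straightforward case check rather than a genuine obstacle; no deeper stability machinery is needed beyond the trivial observation that permuting variables preserves stability.
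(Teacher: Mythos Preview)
Your argument is correct and is essentially identical to the paper's own proof: both use the reversal bijection $\pi\mapsto\pi_n\cdots\pi_1$ between $\mathscr{L}(P)$ and $\mathscr{L}(P^*)$, observe that $\DB$ and $\AB$ are swapped under reversal, and conclude the variable-swap identity (and hence stability). The only difference is that you spell out the boundary bookkeeping in more detail than the paper does.
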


\begin{proof}
First note that $\pi^*=\pi_n \cdots \pi_1 \in \mathscr{L}(P^*)$ if and only if $\pi=\pi_1\cdots \pi_n \in \mathscr{L}(P).$ Hence $\DB (\pi)= \AB (\pi^*)$ and $\AB (\pi)= \DB (\pi^*)$, and the proposition follows. 
\end{proof}

\begin{corollary}
If $F$ is the dual of a naturally labeled decreasing forest, then $A_F(\zz)$ is stable.
\end{corollary}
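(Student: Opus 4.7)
The plan is to combine the previous corollary (stability for naturally labeled decreasing forests) with Proposition \ref{dual} (stability is preserved by the dual operation, up to a permutation of variables) in an entirely formal way.

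More precisely, suppose $F$ is the dual of a naturally labeled decreasing forest, so that $F = G^{*}$ for some naturally labeled decreasing forest $G$ on $[n]$. By the preceding corollary, $A_{G}(\zz)$ is stable. Applying Proposition \ref{dual} to $P = G$ yields that $A_{G^{*}}(\zz) = A_{F}(\zz)$ is stable; indeed, the identity
\[
A_{F}(z_1,\ldots, z_n, z_{1'},\ldots, z_{n'}) = A_{G}(z_{1'}, \ldots, z_{n'}, z_1, \ldots, z_n)
\]
makes it clear that $A_{F}(\zz)$ differs from $A_{G}(\zz)$ only by the involutive change of variables swapping $z_i$ with $z_{i'}$ for all $i \in [n]$, and stability is invariant under any permutation of the variables.

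There is essentially no obstacle here: the entire content of the statement is packaged into the two results being quoted, and the argument is a one-line application of them. The only thing worth noting is that the naturally labeled decreasing forest $G$ whose dual is $F$ is uniquely determined by $F$ (take $G = F^{*}$, since $(F^{*})^{*} = F$), so invoking the previous corollary on $G$ is unambiguous.
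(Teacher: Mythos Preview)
Your proof is correct and is exactly the argument the paper intends: it states the corollary without proof, immediately after Proposition \ref{dual} and the corollary for naturally labeled decreasing forests, as an evident consequence of combining the two.
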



\begin{thebibliography}{99}


\bibitem{LYPSI} J. Borcea, P. Br\"and\'en,  {\em The Lee-Yang and P\'olya-Schur programs. I. Linear operators preserving stability,} Invent. Math. {\bf 177} (2009), 541--569.   

\bibitem{LYPSII} J. Borcea, P. Br\"and\'en,  {\em The Lee-Yang and P\'olya-Schur programs. II. Theory of stable polynomials and applications,} Comm. Pure Appl. Math. {\bf 62} (2009), 1595--1631.

\bibitem{BBL} J. Borcea, P. Br\"and\'en,  T. M. Liggett, {\em Negative dependence and the geometry of polynomials}, J. Amer.
Math. Soc. {\bf 22} (2009), 521--567.


\bibitem{Brenti} F.~Brenti, \emph{Unimodal, log-concave and P\'olya frequency sequences in combinatorics}, Mem. Amer. Math. Soc. {\bf 81} (1989).

\bibitem{FSurvey} F.~Brenti, \emph{Log-concave and unimodal sequences in algebra, combinatorics, and geometry: an update}, 
In Jerusalem combinatorics 93, 178 of Contemp. Math., pages 71--89. Amer. Math. Soc., Providence, RI (1994).



\bibitem{NSC1}  P. Br\"and\'en,   
{\em On operators on polynomials preserving real-rootedness and the Neggers-Stanley conjecture,} 
J. Algebraic Combin. {\bf 20} (2004), 119--130. 

\bibitem{NSC2}  P. Br\"and\'en, {\em  Sign-graded posets, 
unimodality of $W$-polynomials and the Charney-Davis conjecture,} Electron. J. Combin. {\bf 11}(2) (2004), Stanley Festschrift, R9.

\bibitem{NSC3}  P. Br\"and\'en, {\em Counterexamples to the Neggers-Stanley conjecture,} Electron. Res. Announc. Amer. Math. Soc. {\bf 10} (2004), 155--158.

\bibitem{BrU}  P. Br\"and\'en, Unpublished (2009).


\bibitem{PSurvey}
P. ~Br\"and\'en, {\em Unimodality, log-concavity, real-rootedness and beyond}, 
 Handbook of Enumerative Combinatorics, (2015).

\bibitem{Mon}
P.~Br\"and\'en, J.~Haglund, M.~Visontai,   D.~G.~Wagner, {\em Proof of the monotone column permanent conjecture,} in Notions of positivity and the geometry of polynomials. Trends in Mathematics, Birkhauser Verlag {\bf 1} (2011), 63--78.

\bibitem{COSW} Y-B.~Choe, J.~G.~Oxley, A.~D.~Sokal, D.~G.~Wagner,
\emph{Homogeneous multivariate polynomials with the half-plane property, }
Special issue on the Tutte polynomial. 
Adv. in Appl. Math. {\bf 32} (2004), 88--187.

\bibitem{CC3} T.~Craven, G.~Csordas, \emph{Composition theorems, multiplier sequences and complex zero decreasing sequences}, in Value Distribution Theory and Related Topics, Advances in Complex Analysis and Its Applications, Vol. 3, eds. G. Barsegian, I. Laine and C. C. Yang, Kluwer Press, (2004).

\bibitem{DHT} G. Duchamp, F. Hivert, J.-Y. Thibon, \emph{Noncommutative symmetric functions VI: free quasi-symmetric functions and related algebras},
Internat. J. Algebra Comput. {\bf 12} (2002) 671--717.

\bibitem{Euler} L.~Euler, \emph{Methodus universalis series summandi ulterius promota}, Commentarii acdemiae scientiarum imperialis Petropolitanae {\bf 8} (1736), 147--158. Reprinted in his Opera Omnia, series 1 {\bf 14}, 124--137.

 \bibitem{Fro} G.~Fr\"obenius, \emph{\"Uber die Bernoulli'sehen zahlen und die Euler'schen polynome}, Sitzungsberichte der K\"oniglich Preussischen Akademie der Wis-
senschaften (1910), zweiter Halbband.

\bibitem{HV1} J.~Haglund, M.~Visontai, \emph{On the Monotone Column Permanent conjecture}, 
Discrete Math. Theor. Comput. Sci. proc., AK (2009), 443--454.

\bibitem{HV2} J.~Haglund, M.~Visontai, \emph{Stable multivariate Eulerian polynomials and generalized Stirling permutations}, European J. Combin. {\bf 33} (2012), 477--487.

\bibitem{LR} J-L. Loday, M. O. Ronco, \emph{Hopf algebra of the planar binary trees},
Adv. Math. {\bf 139} (1998), 293--309. 

\bibitem{MR} C. Malvenuto, C. Reutenauer, \emph{Duality between quasi-symmetric functions and Solomon descent algebra},
 J. Algebra {\bf 177} (1995), 967--982.

\bibitem{Ne} J.~Neggers, \emph{Representations of finite partially ordered sets}, J. Combin. Inform. System Sci. {\bf 3} (1978), 113--133.

\bibitem{Kyle} T.~K.~Petersen, {\em Eulerian Numbers}, Birkha\"user Advanced Texts Basler Lehrb\"ucher, Springer, New York (2015).

\bibitem{RW} V.~Reiner, V.~Welker, \emph{On the Charney-Davis and Neggers-Stanley conjectures}, J. Combin. Theory Ser. A {\bf 109} 
(2005), 247--280.

\bibitem{Stan} R.~P.~Stanley, \emph{Ordered structures and partitions}, Mem. Amer. Math. Soc. {\bf 119} (1972).

\bibitem{RSurvey} R.~P.~Stanley, \emph{Log-concave and unimodal sequences in algebra, combinatorics, and geometry}, In Graph theory and its applications: East and West (Jinan, 1986), 
Ann. New York Acad. Sci., {\bf 576} (1989), 500--535. 

\bibitem{Stem1} J.~R.~Stembridge, \emph{Enriched $P$-partitions,} Trans. Amer. Math. Soc. {\bf 349} (1997), 763--788. 

\bibitem{Stem2} J.~R.~Stembridge, \emph{Counterexamples to the poset conjectures of Neggers, Stanley, and Stembridge,} Trans. Amer. Math. Soc. {\bf 359} (2007), 1115--1128.

\bibitem{Wa1} D.~G. Wagner, \emph{Enumeration of functions from posets to chains}, European J. Combin. {\bf 13} (1992), 313--324.

\bibitem{Wa2} D.~G. Wagner, \emph{Total positivity of Hadamard products}, J. Math. Anal. Appl. {\bf 163}  (1992) 459--483.

\bibitem{Wa3} D.~G. Wagner, \emph{Multivariate stable polynomials: theory and applications},
Bull. Amer. Math. Soc. {\bf 48} (2011), 53--84.







\end{thebibliography}
\end{document}